\newcommand{\R}{\mathbb{R}}
\newcommand{\F}{\mathbb{F}}
\newcommand{\C}{\mathbb{C}}
\newcommand{\D}{\mathscr{D}}
\theoremstyle{plain}
\newtheorem{thm}{Theorem}[section]
\newtheorem{cor}[thm]{Corollary}
\newtheorem{lem}[thm]{Lemma}
\newtheorem*{question}{Question}
\newtheorem*{claim*}{Claim}
\newtheorem*{thm*}{Theorem} 
\newtheorem*{cor*}{Corollary}
\newtheoremstyle{case}{}{}{}{}{}{:}{ }{}
\theoremstyle{case}
\theoremstyle{definition}
\newtheorem{exampleth}[thm]{Example}
\newenvironment{example}{\begin{exampleth}}{\hfill
    $\diamond$\\ \end{exampleth}}
\newtheorem{remark}[thm]{Remark}
\newtheorem{main}{Main Result}
\newtheorem*{exampleth*}{Example}
\newenvironment{example*}{\begin{exampleth*}}{\hfill
    $\diamond$ \end{exampleth*}}
\DeclareMathOperator{\SL}{SL}
\DeclareMathOperator{\diag}{diag}
\begin{document}
	\nocite{*}
	\title{The Fiber of the principal minor map}
	
	\author{Abeer Al Ahmadieh}
	\address{School of Mathematics, Georgia Institute of Technology, Atlanta, USA 30332} 
	\email{aahamadieh3@gatech.edu}

	\begin{abstract} 
	This paper explores the fibers of the principal minor map over a general field. The principal minor map is the map that assigns to each $n\times n$ matrix the $2^n$-vector of its principal minors. In $1984$, Hartfiel and Loewy proposed a condition that was sufficient to ensure that the fiber of the principal minor map is a single point up to diagonal equivalence. Loewy later improved upon this condition in $1986$. In this paper, we provide a necessary and sufficient condition for the fiber to be a point up to diagonal equivalence. Additionally, we establish a connection between the reducibility of a matrix and the reducibility of its determinantal representation. Using this connection, we fully characterize the fiber of symmetric and Hermitian matrices in the space of $n\times n$ matrices over any field $\F$. We also use these techniques to answer a question of Borcea, Br\"and\'en, and Liggett concerning real stable matrices.	
	\end{abstract}
	\maketitle

	\section{Introduction}\label{sec:Intro}
	The primary objective of this paper is to investigate the relationship between two matrices with equal principal minors over a field $\F$. It is easy to see that if two matrices are diagonally equivalent, then their principal minors are equal. Two matrices $A$ and $B$ are \textit{diagonally equivalent} if there exists an invertible diagonal matrix $D$ such that $A = DBD^{-1}$ or $A=DB^T D^{-1}$. The intriguing question is to determine the conditions under which this remains the sole connection between the two matrices. Reducible matrices, for instance, might have equal principal minors without being diagonally equivalent. A matrix is \textit{reducible} if it can be written as a block upper triangular matrix after permuting some rows and the corresponding columns. If we replace each nonzero entry with one and the diagonal entries with zero, then a reducible matrix corresponds to the adjacency matrix of a directed graph with at least two strongly connected components. The block diagonal matrix $\tiny{\begin{pmatrix}
		E & 0 \\ 0 & G
	\end{pmatrix}}$, for instance, has principal minors equal to those of all matrices of the form $\tiny{\begin{pmatrix}
		E & F \\ 0 & G
	\end{pmatrix}}$.\\

	Another motivating goal of this paper is to characterize \textit{real stable matrices} and answer a question of Borcea, Br\"and\'en, and Liggett \cite[Question~3.4]{BBL09}. An $n \times n$ complex matrix $A$ is real stable if its \textit{determinantal polynomial} $f_A = \det(A + \diag\{x_1,\hdots,x_n\})$ is real stable. A polynomial $f$ is \textit{real stable} if it has no zeros with strictly positive imaginary parts. 	Real stable polynomials have many applications in different fields, see for instance \cite{AOR,ConvAnalysis,BBL09,GurvitsPerm, RamGraph}. In \cite{AV22}, the author and Vinzant prove that if $f_A$ is real stable for some complex matrix $A$, then there exists a Hermitian matrix $H$ such that $f_A = \det(H+\diag\{x_1,\hdots,x_n\})$. Since the coefficients of $f_A$ are the principal minors of $A$, the matrices $H$ and $A$ share the same principal minors. Therefore, to characterize real stable matrices, we study the structure of matrices that have principal minors equal to that of a Hermitian matrix. \\
	
	Another way of rephrasing the same problem is by considering the \textit{principal minor map}. Given an $n \times n$ matrix $A$ with entries in a field $\F$, let $A_S$ denote the principal minor indexed by $S$, that is the determinant of the submatrix of $A$ indexed by the set $S$ on the rows and columns. The principal minor map is the map that assigns to each matrix the vector of its principal minors, namely
	\[
	\varphi: \F^{n\times n} \longrightarrow \F^{2^n} \ \ \text{ given by } \ \ A \longrightarrow \left(A_S\right)_{S\subseteq[n]}.
	\]	
	where we set $A_\emptyset = 1$. In this paper we are interested in studying the preimage of this map. This problem dates back to $1980$ when Engel and Schneider \cite{EngelSchneider} proved that a completely reducible matrix $A$ that belongs to the fiber of a symmetric matrix $B$ must be diagonally equivalent to it. In $1984$, Hartfiel and Loewy \cite{HartfielLoewy} considered the fibers of general matrices over any field $\F$. They proved that if the fiber contains an irreducible matrix $A$ with the property that the submatrices $A[X,X^c]$ and $A[X^c,X]$ have rank at least two for any subset $X$ of $[n]$ with $2 \leq |X| \leq n-2$, then the fiber is a point up to diagonal equivalence. Here $X^c$ denotes the complement of $X$ in $[n]$. This result was improved in $1986$ by Loewy \cite{Loewy}, who proved that if only one of the submatrices $A[X,X^c]$ or $A[X^c,X]$ of an irreducible matrix $A$ has rank at least two, then the fiber of $A$ is still a point up to diagonal equivalence. In both papers, the authors consider the inverse of the matrix $\left(\diag\{x_1,\hdots,x_n\} + A\right)$. We say that a matrix $A$ has a \textbf{cut} $X\subset [n]$ if $2\leq |X| \leq n-2$ and the ranks of $A[X,X^c]$ and $A[X^c,X]$ are at most one.
	
	In this paper we consider the fiber of any $n\times n$ matrix over a field $\F$. While Loewy \cite{Loewy} proves that an irreducible matrix with no \textit{cuts} have only one preimage up to diagonal equivalence, we show that such matrices and irreducible symmetric matrices are the only matrices that satisfy this property. 
	\begin{main}[Theorem~\ref{thm:Main}]
		The fiber of a matrix over any field $\F$ under the principal minor map consists of only one point up to diagonal equivalence if and only if the matrix is irreducible and either it has no cuts or it is diagonally equivalent to a symmetric matrix.
	\end{main}
	Using this result, it is straightforward to check on a computer whether the fiber of a matrix is a point up to diagonal equivalence. If that is not the case, then using the proof of the theorem, one can generate an algorithm to produce another point in the fiber.
	
	By relating the structure of the matrix $A$ to the factorization of its \textit{determinantal polynomial} $f_A$
	\[
	f_A = \det(\diag\{x_1,\hdots,x_n\}+A), 
	\]
	we characterize the fibers of symmetric and Hermitian matrices in the space of $n\times n$ matrices. In particular, we prove that a matrix $A$ can be written as a block upper triangular matrix with $s$ diagonal blocks if and only if its determinantal polynomial $f_A$ factors into $s$ irreducible factors. A direct consequence is that  the fiber of an irreducible matrix under the principal minor map consists only of irreducible matrices. Using this relation we characterize the fiber of a symmetric matrix under the principal minor map in the space of $n\times n$ matrices over any field $\F$. Notice that a symmetric matrix is either irreducible or completely reducible. A \textit{completely reducible} matrix is a matrix that can be written as a block diagonal matrix by permuting some rows and the corresponding columns, with each diagonal block being irreducible.
	\begin{main}[Theorem~\ref{thm:FiberSym}]
		Let $A$ be a symmetric matrix with entries in a field $\F$. Then either
		\begin{enumerate}
			\item The matrix $A$ is irreducible and the fiber of $A$ consists of all matrices that are diagonally equivalent to $A$, or
			\item The matrix $A$ can be written as a completely reducible matrix with $s$ diagonal blocks $A_1,\hdots,A_s$ that are irreducible, and the fiber of $A$ in $\F^{n\times n}$ consists of all matrices that can be written as block upper triangle matrices with $s$ irreducible diagonal blocks that are diagonally equivalent to the diagonal blocks of $A$.
		\end{enumerate} 
	\end{main}
	In particular, this theorem implies that the fiber of an irreducible symmetric matrix with entries in a field $\F$ consists of a single point up to diagonal equivalence. Notice also that the result of Engel and Schneider \cite[Theorem $3.5$]{EngelSchneider} follows from this theorem. Moreover, one can conclude that the fiber of a symmetric matrix $A$ in the space of symmetric matrices, $\rm{Sym}_n(\F)$, consists of matrices that are diagonally equivalent to $A$ via a diagonal matrix with entries equal $\pm1$ (see for instance \cite{RisingKuleszaTaskar15}) from the above theorem. 
	
	We also investigate the fibers of Hermitian matrices over the complex field. While the fiber of an irreducible symmetric matrix consists of a single point, the fiber of an irreducible Hermitian matrix can be bigger. An example illustrating this is provided in \cite[Example 4.8]{AV22}. However, we establish that the fiber consists solely of irreducible Hermitian matrices, up to diagonal equivalence. This characterization, combined with the previous argument regarding the structure of a matrix based on its determinantal polynomial factorization, grants us a comprehensive understanding of the fiber of a Hermitian matrix. Similar to the case of symmetric matrices, a reducible Hermitian matrix can be expressed as a completely reducible matrix.
		
	\begin{main}[Theorem~\ref{thm:FiberHer}]
			Let $A$ be a Hermitian matrix. Then either
			\begin{enumerate}
				\item The matrix $A$ is irreducible and the fiber of $A$ consists of Hermitian irreducible matrices up to diagonal equivalence, or
				\item The matrix $A$ can be written as a completely reducible matrix with $s$ diagonal blocks $A_1,\hdots,A_s$ that are irreducible and the fiber of $A$ consists exactly of matrices that can be written as block upper triangular matrices with $s$ diagonal blocks that are irreducible and Hermitian up to diagonal equivalence and that have principal minors equal to those of one of the $A_i$s.
			\end{enumerate} 
	\end{main}
	This characterization along with the author's work with Vinzant \cite[Theorem $6.4$]{AV22} answer a question of Borcea, Br\"and\'en, and Liggett:
	\begin{question}\label{ques:StableMatrices}\cite[Question $3.4$]{BBL09}
		Characterize the class of all real stable $n\times n$ matrices, that is, $n \times n$ matrices $A$ such that $\det(A + \diag\{x_1,\hdots,x_n\})$ is real stable.
	\end{question}
	\begin{main}[Corollary~\ref{cor:StableMatrix}]
		A matrix $A$ is real stable if it can be written as a block upper triangular matrix with diagonal blocks that are irreducible and Hermitian, up to diagonal equivalence, and that have determinantal polynomials equal to the irreducible factors of the polynomial $f_A = \det(\diag\{x_1,\hdots,x_n\}+A)$.
	\end{main}
	
	The fibers of skew symmetric matrices were addressed by Boussa\"{i}ria and Cherguia \cite{SkewSym}. They prove that if two skew symmetric matrices have equal principal minors of size at most $4$, and one of the matrices is dense and does not contain any cut, then the matrices are diagonally equivalent by a diagonal matrix with diagonal entries equal $\pm1$.\\

	While in this paper we focus on the preimage of the principal minor map, the image of this map has been studied in various contexts. Holtz and Sturmfels \cite{HoltzSturmfels07} explored the image of the space of real and complex symmetric matrices under the principal minor map. They demonstrated that the image is both closed and invariant under the action of the group $\SL_2(\R)^n\rtimes S_n$. They conjectured that the vanishing of polynomials in the orbit of the hyperdeterminant, under the action of this group, defines the image of the principal minor map over the field of complex numbers. This conjecture was resolved by Oeding \cite{Oeding11}. In a recent work by the author and Vinzant \cite{AV21}, techniques from a previous study by Kummer, Plaumann, and Vinzant \cite{KPV15} were utilized to generalize this result, making it applicable to arbitrary unique factorization domains. Moreover in \cite{AV22}, the author and Vinzant prove that the image of the space of Hermitian matrices, under the principal minor map, is cut out by the orbit of finitely many equations and inequalities under the action of $\SL_2(\R)^n\rtimes S_n$. We also examined such representations over more general fields with quadratic extensions. However, in contrast to the Hermitian case, it was shown that for any field $\F$, there is no finite set of equations whose orbit under $\SL_2(\R)^n\rtimes S_n$ defines the image of $n \times n$ matrices over $\F$ under the principal minor map for all values of $n$.

	This paper is organized as follows. In Section~\ref{sec:Background}, we introduce terminology and the basic notations that we use throughout the paper. In Section~\ref{sec:Laplace} we state and prove the generalized Laplace theorem, and we use it in Section~\ref{sec:Main} to construct a matrix in the fiber of an irreducible matrix with a cut and consequently characterize matrices with a single point fiber. In Section~\ref{sec:StructureThm}, we give a description of the structure of a matrix $A$ according to its determinantal polynomial. This description gives a characterization of the fibers of symmetric matrices in Section~\ref{sec:FiberSym} and of Hermitian matrices in Section~\ref{sec:FiberHer}, where a description of stable matrices is also provided. \\

 {\bf Acknowledgements}. We are grateful to Cynthia Vinzant for the helpful comments and discussions. We also thank Jonathan Leake and Josephine Yu for their helpful comments on the paper.

\section{Background and notation}\label{sec:Background}

Through out this paper we use $\F^{n\times n}$ to denote the set of $n\times n$ matrices with entries in the field $\F$. Let $\D(n)$ denote that set of all matrices $A$ such that the fiber of $A$ consists of matrices that are \textit{diagonally equivalent} to $A$ or $A^T$, where $A^T$ denotes the transpose of the matrix $A$. For simplicity, we use the expression \textit{diagonally equivalent} by which we mean diagonally equivalent to the matrix or to its transpose. One of the goals of this paper is to characterize the set $\mathscr{D}(n)$. 

We denote by $A^{\rm adj}$ the adjugate of the matrix $A$ and by ${\rm rk}(A)$ the rank of the matrix $A$. The \textit{determinantal polynomial} of a matrix $A$ is the polynomial $f_A = \det(\diag\{x_1,\hdots,x_n\}+A)$. Notice that a determinantal polynomial is multiaffine. A polynomial is \textit{multiaffine} if it has degree at most one in each variable.
\subsection{The Rayleigh differece of a polynomial}
The Rayleigh difference of a polynomial $f$ with respect to two variables $x_i$ and $x_j$ is given by
\[
\Delta_{ij}(f)  = \frac{\partial f}{\partial x_i}  \frac{\partial f}{\partial x_j} - f \frac{\partial^2 f}{\partial x_i \partial x_j}.
\]
In \cite{AV21} and \cite{AV22}, the author and Vinzant use the Rayleigh difference of a polynomial in order to characterize multiaffine determinantal polynomials of symmetric and Hermitian matrices. We also use them to characterize determinantal polynomials of $n\times n$ matrices as the following theorem shows.

\begin{thm}[Theorem~3.1 in \cite{AV22}]\label{thm:AV}
	Let $f \in \F[x_1,\hdots,x_n]$ be
	multiaffine in the variables $x_1,\hdots,x_n$ with its coefficient of $x_1\cdots x_n$ equals one.
	Then $f= \det({\rm diag}(x_1,\hdots,x_n) +A)$ for some $A\in \F^{n\times n}$ if and only if for every $i\neq j\in [n]$, the polynomials $\Delta_{ij}(f)$ factor as the product $g_{ij}\cdot g_{ji}$ where 
	\begin{itemize}
		\item[(a)] $g_{ij}\in  \F[x_k : k\neq i,j]$ is multiaffine in $x_1,\hdots, x_n$ and  
		\item[(b)] for every $k\in [n]\backslash \{i,j\}$, ${\rm res}_{x_k}(g_{ij}, f) = g_{ik}g_{kj}$. 
	\end{itemize}
	In this case, we can take $g_{ij}$ to be the $(i,j)$th entry of $({\rm diag}(x_1,\hdots,x_n) +A)^{\rm adj}$, with $M^{\rm adj}$ represents the adjugate matrix of $M$.
\end{thm}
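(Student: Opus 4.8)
The statement is a biconditional, and I would handle the two implications separately. \emph{Forward direction.} Suppose $f=\det(M)$ with $M=\diag(x_1,\dots,x_n)+A$, and take $g_{ij}$ to be the $(i,j)$ entry of $M^{\rm adj}$, which up to sign is the minor of $M$ obtained by deleting row $j$ and column $i$. Property~(a) is then immediate, because in that minor the only diagonal variables surviving are the $x_k$ with $k\notin\{i,j\}$, each in degree at most one. Since $\partial f/\partial x_i$ is the principal minor of $M$ complementary to $\{i\}$ and $\partial^2 f/\partial x_i\partial x_j$ the one complementary to $\{i,j\}$, the identity $\Delta_{ij}(f)=g_{ij}g_{ji}$ is exactly the Desnanot--Jacobi (Dodgson condensation) identity for $M$. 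For property~(b), since $f$ has degree one in $x_k$ a short manipulation gives ${\rm res}_{x_k}(g_{ij},f)=\pm\bigl(f\,\partial_{x_k}g_{ij}-g_{ij}\,\partial_{x_k}f\bigr)$; now $\partial_{x_k}f=(M^{\rm adj})_{kk}$ and $\partial_{x_k}g_{ij}$ is, up to sign, the minor of $M$ obtained by deleting rows $j,k$ and columns $i,k$, while Jacobi's theorem for the $2\times2$ minor of $M^{\rm adj}$ on rows $\{i,k\}$ and columns $\{j,k\}$ reads $g_{ij}(M^{\rm adj})_{kk}-g_{ik}g_{kj}=\pm\det(M)\cdot(\text{that same minor})$. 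Combining the two identities yields ${\rm res}_{x_k}(g_{ij},f)=g_{ik}g_{kj}$ once signs are matched to the resultant convention, and this computation simultaneously establishes the final sentence of the theorem.

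\emph{Converse.} My plan is to reconstruct $A$ from the data and induct on $n$, taking $n\le2$ as the base case, where condition~(b) is vacuous. Given $f$ and $(g_{ij})$ satisfying (a) and (b), the matrix $A$ is forced: $a_{ii}$ must be the coefficient of $\prod_{k\ne i}x_k$ in $f$, and for $i\ne j$ the entry $a_{ij}$ must be, up to the adjugate sign, the coefficient of the top monomial $\prod_{k\notin\{i,j\}}x_k$ in $g_{ij}$ --- well defined by~(a) and multiaffinity. I then descend to $n-1$ variables via $\bar f:=\partial_{x_n}f$, which is multiaffine in $x_1,\dots,x_{n-1}$ with the coefficient of $x_1\cdots x_{n-1}$ still equal to one, and $\bar g_{ij}:=\partial_{x_n}g_{ij}$ for $i,j\in[n-1]$. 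Using only elementary facts about resultants of polynomials of degree one in $x_n$, one verifies that $(\bar g_{ij})$ satisfies (a) and (b) for $\bar f$: property~(a) is clear, and ${\rm res}_{x_k}(\bar g_{ij},\bar f)$ is the coefficient of $x_n^2$ in ${\rm res}_{x_k}(g_{ij},f)=g_{ik}g_{kj}$, which equals $\bar g_{ik}\bar g_{kj}$ because $g_{ik}$ and $g_{kj}$ are linear in $x_n$. By the inductive hypothesis (including its last clause) $\bar f=\det(\diag(x_1,\dots,x_{n-1})+B)$ with $\bar g_{ij}$ the $(i,j)$ entry of $(\diag(x_1,\dots,x_{n-1})+B)^{\rm adj}$; comparing coefficients identifies $B$ with the principal submatrix of $A$ on rows and columns $1,\dots,n-1$.

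\emph{Finishing the converse.} It remains to prove $f=\det(\diag(x)+A)$. A Schur complement expansion along the last row and column writes the right-hand side as $(x_n+a_{nn})\bar f-\sum_{i,j\in[n-1]}a_{ni}\,\bar g_{ij}\,a_{jn}$, so, since $f=x_n\bar f+f|_{x_n=0}$, it suffices to show $f|_{x_n=0}-a_{nn}\bar f=-\sum_{i,j\in[n-1]}a_{ni}\,\bar g_{ij}\,a_{jn}$. Write $h$ for the difference of the two sides; both are multiaffine in $x_1,\dots,x_{n-1}$ and carry the same top term $a_{nn}\prod_{k\in[n-1]}x_k$, so $\deg h\le n-2$. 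A direct computation from $f=x_n\bar f+f|_{x_n=0}$ shows $\Delta_{in}(f)=\bar f\,\partial_{x_i}(f|_{x_n=0})-(f|_{x_n=0})\,\partial_{x_i}\bar f$ for $i\in[n-1]$, and likewise the Rayleigh difference in $x_i$ and $x_n$ of the reconstructed determinant equals $\bar f\,\partial_{x_i}\widehat f-\widehat f\,\partial_{x_i}\bar f$, where $\widehat f:=a_{nn}\bar f-\sum_{i,j}a_{ni}\bar g_{ij}a_{jn}$; by the forward direction the latter equals the product of the $(i,n)$ and $(n,i)$ entries of $(\diag(x)+A)^{\rm adj}$. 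Hence, granting that this product equals the prescribed $g_{in}g_{ni}$ for every $i\in[n-1]$, we get $\bar f\,\partial_{x_i}h=h\,\partial_{x_i}\bar f$ for all $i$, so $h/\bar f$ is constant and the degree bound forces $h=0$, closing the induction.

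\emph{Main obstacle.} The hard part will be exactly the claim invoked at the end: that the last row and column of the reconstructed matrix reproduce the prescribed Rayleigh differences $g_{in}g_{ni}$. This is where the \emph{global} consistency of the data must be used. Each factorization $\Delta_{ij}(f)=g_{ij}g_{ji}$ is determined only up to a scalar, and condition~(b) is precisely the cocycle-type constraint forcing all these scalings to fit together (ultimately up to one diagonal conjugation, which is the diagonal-equivalence ambiguity). Concretely, establishing the claim amounts to extracting the required bilinear identity among $g_{in}$, $g_{ni}$, the $\bar g_{ij}$, and the leading coefficients $a_{in},a_{ni}$ from the resultant conditions ${\rm res}_{x_k}(g_{in},f)=g_{ik}g_{kn}$ ($k\in[n-1]$), while tracking via~(a) which variables each polynomial depends on; I expect that bookkeeping to be the technical heart of the proof.
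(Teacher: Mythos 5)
This theorem is not proved in the paper at all: it is imported verbatim as Theorem~3.1 of \cite{AV22}, so there is no internal argument to compare against and your proposal has to stand on its own. Your forward direction does: taking $g_{ij}$ to be the $(i,j)$ entry of the adjugate, property (a) is immediate, $\Delta_{ij}(f)=g_{ij}g_{ji}$ is Desnanot--Jacobi (the paper's Theorem~\ref{thm:Dodgson}), and (b) follows from Jacobi's identity for $2\times 2$ minors of the adjugate together with the fact that for polynomials of degree one in $x_k$ the resultant equals $g_{ij}\,\partial_{x_k}f-f\,\partial_{x_k}g_{ij}$; this is correct up to the sign bookkeeping you defer.

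The converse, however, has a genuine gap, and you name it yourself: the entire argument funnels into the claim that the matrix reconstructed from the top coefficients of the $g_{ij}$, with its last row and column adjoined to the inductively obtained $B$, has adjugate entries at positions $(i,n)$ and $(n,i)$ whose product is the prescribed $g_{in}g_{ni}$. That claim is exactly where the global content of hypotheses (a) and (b) must be used --- without it $h$ need not vanish and the induction does not close --- and your write-up stops at calling it the ``technical heart'' rather than proving it. Two further points compound the problem. First, your induction quietly strengthens the statement being proved: you need the given $\bar g_{ij}=\partial_{x_n}g_{ij}$ to \emph{be} the adjugate entries of the representing matrix $B$, whereas the theorem's last clause only says the adjugate entries form \emph{one} admissible choice of the $g$'s; showing that an arbitrary admissible system agrees with the adjugate of some representation (after fixing the diagonal-scaling freedom you allude to) is itself a nontrivial step that must be built into the inductive hypothesis and verified. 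Second, the finishing step ``$\bar f\,\partial_{x_i}h=h\,\partial_{x_i}\bar f$ for all $i$ forces $h/\bar f$ constant'' needs an actual argument over an arbitrary field $\F$ (e.g.\ via irreducible factors of $\bar f$ and degree counts), since the theorem is not restricted to characteristic zero and $\bar f$ need not be irreducible. As it stands, the proposal is a plausible outline of a known-type argument with its decisive lemma missing.
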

We use ${\rm res}_{x_k}(g, h)$ to denote the \textit{resultant} of two polynomials $g$ and $h$ with respect to a variable $x_k$:
\[
{\rm res}_{x_k}(g,h) = (g|_{x_k=0}) \cdot \frac{\partial}{\partial x_k} h - (h|_{x_k=0}) \cdot \frac{\partial}{\partial x_k}g. 
\]
Moreover, we use the Rayleigh differences to study the irreducibility of a multiaffine polynomial as shown in the following lemma from \cite{AV22}. We include the proof here for completeness.
\begin{lem}[Proposition~2.2 in \cite{AV22}]\label{lem:ZeroDelta}
	If $f\in \F[x_1, \hdots, x_n]$ has degree one in each of $x_i$ and $x_j$, 
	then $\Delta_{ij}(f)=0$ if and only if $f$ factors into polynomial $g\cdot h$ with 
	$g\in \F[x_k : k\neq i]$ and $h\in \F[x_k:k\neq j]$.
\end{lem}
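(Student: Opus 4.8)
The plan is to prove Lemma~\ref{lem:ZeroDelta} by a direct computation using the multiaffine structure of $f$ in the variables $x_i$ and $x_j$. Write
\[
f = x_i x_j \, a + x_i \, b + x_j \, c + d,
\]
where $a, b, c, d \in \F[x_k : k \neq i, j]$. The backward direction is easy: if $f = g \cdot h$ with $g$ free of $x_i$ and $h$ free of $x_j$, then $\partial f / \partial x_i = g \cdot (\partial h / \partial x_i)$ and $\partial f / \partial x_j = (\partial g / \partial x_j) \cdot h$, while $\partial^2 f / \partial x_i \partial x_j = (\partial g / \partial x_j)(\partial h / \partial x_i)$, so $\Delta_{ij}(f) = g h \cdot (\partial g / \partial x_j)(\partial h / \partial x_i) - g h \cdot (\partial g / \partial x_j)(\partial h / \partial x_i) = 0$.

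For the forward direction, I would compute $\Delta_{ij}(f)$ in terms of $a, b, c, d$. Since $\partial f / \partial x_i = x_j a + b$ and $\partial f / \partial x_j = x_i a + c$ and $\partial^2 f / \partial x_i \partial x_j = a$, we get
\[
\Delta_{ij}(f) = (x_j a + b)(x_i a + c) - (x_i x_j a + x_i b + x_j c + d) a = b c - a d.
\]
So the hypothesis $\Delta_{ij}(f) = 0$ is exactly the identity $ad = bc$ in the polynomial ring $\F[x_k : k \neq i, j]$, which is a UFD. The goal is then to factor $f$ as claimed. If $a = 0$, then $bc = 0$, so (WLOG) $b = 0$ and $f = x_j c + d$ is already free of $x_i$, and we may take $g = f$, $h = 1$ (or handle the symmetric subcase). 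If $a \neq 0$, let $p = \gcd(a, b)$, write $a = p a'$, $b = p b'$ with $\gcd(a', b') = 1$; then $a' d = b' c$ forces $a' \mid c$, say $c = a' c'$, and then $d = b' c'$. One checks
\[
f = x_i x_j p a' + x_i p b' + x_j a' c' + b' c' = (x_i p + c')(x_j a' + b'),
\]
and $x_i p + c'$ is free of $x_j$ while $x_j a' + b'$ is free of $x_i$, completing the proof after relabeling $g$ and $h$ (possibly swapping their roles to match the statement's convention that $g$ omits $x_i$ and $h$ omits $x_j$).

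The main obstacle — really the only subtle point — is organizing the case analysis cleanly so that every degenerate situation ($a = 0$, or $b = 0$, or $c = 0$, or $f$ identically zero) is subsumed, and making sure the gcd manipulation is valid, which it is since $\F[x_k : k \neq i, j]$ is a UFD. One should also note that multiaffineness of $f$ in the remaining variables is inherited by the factors in the UFD argument, though the lemma as stated does not require tracking that. I expect the write-up to be short, with the computation $\Delta_{ij}(f) = bc - ad$ doing essentially all the work.
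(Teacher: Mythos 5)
Your proof is correct and takes essentially the same approach as the paper: write $f = ax_ix_j + bx_i + cx_j + d$, compute $\Delta_{ij}(f) = bc - ad$, and factor $f$ accordingly in both directions. The only difference is that you spell out the factorization step via an explicit gcd argument in the UFD $\F[x_k : k\neq i,j]$ (plus the degenerate cases), a detail the paper's proof asserts without elaboration.
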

\begin{proof}
	By assumption we can write $f = ax_ix_j + b x_i + c x_j + d$ for $a,b,c,d\in \F[x_k: k\neq i,j]$. 
	Then $\Delta_{ij}(f) = bc-ad$. If $\Delta_{ij}(f) = 0$, then there is some factorization 
	$b = b_1 b_2$ and $c = c_1c_2$ for which $a = b_1c_1$ and $d = b_2c_2$.  
	Hence $f = (b_1 x_i + c_2)(c_1x_j + b_2)$.  Similarly, if $f = (b_1 x_i + c_2)(c_1x_j + b_2)$ for some $b_1, b_2, c_1, c_2\in \F[x_k: k\neq i,j]$, then $\Delta_{ij}(f) = bc-ad=0$. 
\end{proof}
\begin{thm}\label{thm:Dodgson}
	Let $A$ be an $n\times n$ matrix in $\F^{n\times n}$. Let $f = \det(\diag\{x_1,\hdots,x_n\}+A)$ and $G= (\diag\{x_1,\hdots,x_n\}+A)^{\rm adj}$. Then
	\[
	\Delta_{ij}(f) = G_{ij} G_{ji},
	\]
	where $G_{ij}$ denotes the $(i,j)^{\rm th}$ entry of $G$.
\end{thm}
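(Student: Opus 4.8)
The plan is to prove the identity $\Delta_{ij}(f) = G_{ij}G_{ji}$ by combining Jacobi's identity for minors of the adjugate with the Dodgson-type condensation that the name of the theorem suggests. Write $M = \diag\{x_1,\dots,x_n\}+A$, so that $f = \det M$ and $G = M^{\mathrm{adj}}$. The first step is to express the partial derivatives of $f$ in terms of entries of $G$. Since $f$ is multiaffine, $\partial f/\partial x_i$ is precisely the $(i,i)$ cofactor of $M$, i.e. $\partial f/\partial x_i = G_{ii}$, and likewise $\partial f/\partial x_j = G_{jj}$. For the mixed second derivative, differentiating twice picks out the principal minor of $M$ obtained by deleting rows and columns $i$ and $j$; call this $\det M(\{i,j\}^c)$, so that $\partial^2 f/\partial x_i\partial x_j = \det M(\{i,j\}^c)$. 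Thus the claim reduces to the purely determinantal identity
\[
G_{ii}\,G_{jj} - \det M \cdot \det M(\{i,j\}^c) = G_{ij}\,G_{ji}.
\]

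The second step is to recognize the left-hand side as an instance of the Desnanot–Jacobi (Dodgson condensation) identity. Jacobi's theorem on minors of the adjugate states that for $i\neq j$ the $2\times 2$ minor of $G$ on rows $\{i,j\}$ and columns $\{i,j\}$ equals $\det M \cdot \det M(\{i,j\}^c)$; that is,
\[
G_{ii}G_{jj} - G_{ij}G_{ji} = \det M \cdot \det M(\{i,j\}^c).
\]
Rearranging this is exactly the displayed identity above, which completes the proof. So the whole argument is: (1) identify the three derivatives appearing in $\Delta_{ij}(f)$ with cofactors/minors of $M$ using multiaffineness; (2) invoke Jacobi's minor identity for the adjugate and rearrange.

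I expect the only real subtlety — and the step worth writing carefully — to be the bookkeeping in item (1): one must be careful about signs and about which rows/columns survive after the two differentiations, and one should note that these identifications are polynomial identities in the entries of $M$, hence valid over an arbitrary field (so that Jacobi's identity, a universal polynomial identity, applies). One could alternatively avoid citing Jacobi's identity by a direct cofactor expansion, writing $f = a x_i x_j + b x_i + c x_j + d$ with $a,b,c,d \in \F[x_k : k\neq i,j]$ exactly as in the proof of Lemma~\ref{lem:ZeroDelta}, computing $\Delta_{ij}(f) = bc - ad$, and then checking that $G_{ij} = \pm(\text{a factor involving } b,c,d)$ by a Laplace expansion along the relevant rows; but the Jacobi route is cleaner and makes the "Dodgson" in the statement transparent. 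Either way there is no genuine obstacle here — the result is essentially a restatement of Theorem~\ref{thm:AV}'s final clause together with a classical minor identity.
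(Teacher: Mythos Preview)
Your proposal is correct and follows essentially the same route as the paper: identify $\partial f/\partial x_i$, $\partial f/\partial x_j$, and $\partial^2 f/\partial x_i\partial x_j$ with the appropriate (principal) minors of $M$ using multiaffineness, and then apply the Desnanot--Jacobi/Dodgson condensation identity to conclude. The paper's proof is exactly this, citing the identity as ``a classical equality used by Dodgson,'' so your write-up (including the remark that the sign bookkeeping is the only thing to watch) matches both the method and the spirit of the original.
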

\begin{proof}
	Let $S, T\subset [n]$ be of equal cardinality. Let $M(S,T)$ denote the submatrix of $M$ obtained by dropping rows $S$ and columns $T$ from $M$. Then for any $i\neq j\in [n]$, 
	\begin{equation*}
		\det(M(i,i))\cdot \det(M(j,j))  - \det(M)\det(M(\{i,j\}, \{i,j\})) = \det(M(i,j))\cdot \det(M(j,i)).
	\end{equation*}
	This follows from a classical equality used by Dodgson \cite{Dodgson} as a method for computing determinants. Since $M = {\rm diag}(x_1,\hdots,x_n) + A$, then for any subset $S\subseteq [n]$, the principal minor $\det(M(S,S))$ equals the derivative of $f$ with respect to the variables in $S$,  
	$\left(\prod_{i\in S}\frac{\partial}{\partial x_i}\right) f$. Since $G_{ij} = \det(M(i,j))$, the equation above then gives that $\Delta_{ij}(f)$ equals $G_{ij} G_{ji}$. 
\end{proof}
\section{The Generalized Laplace Expansion}\label{sec:Laplace}
In this section we give a complete proof of the \textit{generalized Laplace theorem}. While the Laplace expansion theorem states that the determinant of an $n\times n$ matrix $A$ over any field $\F$ can be computed by summing the products of the elements of some row or column with their corresponding cofactors, or $n-1\times n-1$ minors, the generalized Laplace expansion \cite{Laplace} computes the determinant by summing up minors of size $k\times k$ and their corresponding $n-k \times n-k$ minors. In this section we state the theorem and we provide a proof based on exterior algebra for completeness.
\begin{thm}\label{thm:Laplace}[The Generalized Laplace Theorem]
	Let $A$ be an $n\times n$ matrix with entries in a field $\F$. Let $S \subset [n]$ such that $|S| = k$ with $1 \leq k \leq n-1$. The determinant of $A$ is given by:
	\[
	{\rm det}(A) = \sum_{\substack{T\subset [n]\\ |T| = k}} (-1)^{\sum T+\sum S}A_{S,T} A_{S^c,T^c}
	\]
	where $\sum T = \sum_{t\in T} t$.
\end{thm}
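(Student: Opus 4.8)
The plan is to prove the identity via exterior algebra, realizing the determinant as the scalar by which the map $A$ induces on the top exterior power $\bigwedge^n \F^n$, and then decomposing this top power through the intermediate wedge $\bigwedge^k \F^n \otimes \bigwedge^{n-k}\F^n$. Let $e_1,\dots,e_n$ be the standard basis of $\F^n$, and for an ordered subset $T = \{t_1 < \dots < t_k\}$ write $e_T = e_{t_1}\wedge\cdots\wedge e_{t_k}$; these form a basis of $\bigwedge^k\F^n$ as $T$ ranges over $k$-subsets. The matrix $A$ (acting on column vectors) induces a linear map $\bigwedge^k A$ on $\bigwedge^k\F^n$, whose matrix entries in the basis $\{e_T\}$ are exactly the minors $A_{T,S}$ (the determinant of the submatrix of $A$ with rows $T$ and columns $S$) — this is the standard description of the $k$-th compound matrix, which I would either cite or derive in one line by expanding $A e_{s_1}\wedge\cdots\wedge A e_{s_k}$.

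The key computation is then to expand $\det(A)\, e_{[n]} = (Ae_1)\wedge(Ae_2)\wedge\cdots\wedge(Ae_n)$ in two stages. Fix $S\subset[n]$ with $|S|=k$, say $S = \{s_1<\dots<s_k\}$ and $S^c = \{s'_1 < \dots < s'_{n-k}\}$. First I would reorder the wedge product $(Ae_1)\wedge\cdots\wedge(Ae_n)$ so that the factors indexed by $S$ come first, in increasing order, followed by those indexed by $S^c$ in increasing order; this introduces the sign $(-1)^{\varepsilon(S)}$ where $\varepsilon(S)$ is the number of inversions between $S$ and $S^c$, which is a standard combinatorial quantity congruent mod $2$ to $\sum_{s\in S} s - \binom{k+1}{2}$. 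Then $\det(A)\,e_{[n]} = (-1)^{\varepsilon(S)} \big(\bigwedge_{s\in S} Ae_s\big)\wedge\big(\bigwedge_{s'\in S^c} Ae_{s'}\big)$. Now I expand each of the two bracketed wedges in the basis: $\bigwedge_{s\in S} Ae_s = \sum_{|T|=k} A_{T,S}\, e_T$ and similarly $\bigwedge_{s'\in S^c} Ae_{s'} = \sum_{|U|=n-k} A_{U,S^c}\, e_U$. Multiplying these out, the only surviving terms are those with $U = T^c$, and $e_T\wedge e_{T^c} = (-1)^{\varepsilon(T)} e_{[n]}$ with the same inversion sign $\varepsilon(T)$. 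Collecting everything, $\det(A) = \sum_{|T|=k} (-1)^{\varepsilon(S)+\varepsilon(T)} A_{T,S} A_{T^c,S^c}$.

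The final step is bookkeeping on the signs: I need $(-1)^{\varepsilon(S)+\varepsilon(T)} = (-1)^{\sum S + \sum T}$. Since $\varepsilon(T) \equiv \sum_{t\in T} t - \binom{k+1}{2} \pmod 2$ for every $k$-subset $T$ (and likewise for $S$), the two copies of $\binom{k+1}{2}$ cancel and we are left with $(-1)^{\sum S + \sum T}$, matching the statement. (I will also note that $A_{T,S} = A_{S,T}$ only as far as indexing conventions go — one must be slightly careful whether minors are indexed as "rows $T$, columns $S$" or the reverse; the symmetric-looking statement in the theorem with $A_{S,T}$ and $A_{S^c,T^c}$ is exactly what falls out, summing over the row-index set $T$.) I expect the main obstacle to be precisely this sign bookkeeping: verifying the congruence $\varepsilon(T)\equiv \sum_{t\in T} t - \binom{k+1}{2}\pmod 2$ cleanly, and making sure the reorderings of the wedge product on both the $S$-side and the $T$-side contribute inversion counts that combine to cancel the $k$-dependent constant. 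Everything else — the compound-matrix description of $\bigwedge^k A$ and the bilinear expansion — is routine multilinear algebra over an arbitrary field, requiring no characteristic assumptions.
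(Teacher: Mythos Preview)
Your proposal is correct and follows essentially the same exterior-algebra route as the paper: both realize $\det(A)$ via $\bigwedge^n A$, split the wedge of columns into an $S$-block and an $S^c$-block, expand each block in the basis $\{e_T\}$ using the compound-matrix identity, and then track the sign from reordering $e_T\wedge e_{T^c}$ back to $e_{[n]}$.

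The only real difference is in the sign bookkeeping. The paper isolates this into a separate lemma (Lemma~\ref{lem:SignofPer}), proving by induction on $k$ that the permutation $\sigma=\begin{pmatrix}S & S^c\\ T & T^c\end{pmatrix}$ has sign $(-1)^{\sum S+\sum T}$, and simplifies the main argument by taking $S=[k]$. You instead keep $S$ general, introduce the inversion count $\varepsilon(T)=\#\{(t,t')\in T\times T^c: t>t'\}$, and compute directly that $\varepsilon(T)=\sum_{i}(t_i-i)=\sum T-\binom{k+1}{2}$, so that the two copies of $\binom{k+1}{2}$ cancel in $\varepsilon(S)+\varepsilon(T)$. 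This is the same content as Lemma~\ref{lem:SignofPer} (indeed your formula is exactly the base case $S=[k]$ of that lemma), obtained without induction; both methods are short and valid. Your remark about the row/column indexing of $A_{S,T}$ versus $A_{T,S}$ is also on point: the exterior-algebra computation naturally produces $A_{T,S}A_{T^c,S^c}$, which matches the displayed formula in the paper's proof and is equivalent to the statement via $\det(A)=\det(A^T)$.
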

To prove the theorem we use the following lemma.
\begin{lem}\label{lem:SignofPer}
	Let $S$ and $T$ be two subsets of $[n]$ of size $k$. Let $\sigma$ be the permutation of $[n]$ given in two line notation by
	\[
	\sigma = \begin{pmatrix}
		S & S^c \\ T & T^c
	\end{pmatrix}
	\]
	where the elements of $S, T, S^c,$ and $T^c$ are arranged in increasing order. Then 
	\[
	\rm{sgn}(\sigma) = (-1)^{\sum S+ \sum T}.
	\]
\end{lem}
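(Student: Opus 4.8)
The plan is to compute the sign of the permutation $\sigma = \begin{pmatrix} S & S^c \\ T & T^c \end{pmatrix}$ by factoring $\sigma$ through the identity permutation on sorted blocks and counting inversions. First I would observe that $\sigma$ can be written as a composition $\tau_T^{-1} \circ \tau_S$, where $\tau_S$ is the permutation sending $(1,2,\dots,n)$ to the concatenation of $S$ (in increasing order) followed by $S^c$ (in increasing order), and likewise $\tau_T$ for $T$. Indeed, $\tau_S$ maps the first $k$ positions onto $S$ and the last $n-k$ onto $S^c$, so $\tau_T^{-1}\tau_S$ maps $S$ to $T$ and $S^c$ to $T^c$ in the order stipulated in the lemma. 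Hence $\operatorname{sgn}(\sigma) = \operatorname{sgn}(\tau_S)\operatorname{sgn}(\tau_T)$, and it suffices to prove the single-block claim $\operatorname{sgn}(\tau_S) = (-1)^{\sum S + k(k+1)/2}$, since then the two contributions of $k(k+1)/2$ cancel modulo $2$ and we are left with $(-1)^{\sum S + \sum T}$.

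Next I would count the inversions of $\tau_S$ directly. Writing $S = \{s_1 < s_2 < \dots < s_k\}$, the permutation $\tau_S$ lists $s_1, s_2, \dots, s_k$ in its first $k$ slots and then the elements of $S^c$ in increasing order. An inversion of $\tau_S$ is a pair of positions $(p,q)$ with $p<q$ but $\tau_S(p) > \tau_S(q)$. Since both blocks are individually increasing, every inversion has its first coordinate among the first $k$ positions and its second among the last $n-k$; precisely, the number of inversions equals $\sum_{i=1}^{k} \#\{\, m \in S^c : m < s_i \,\}$. For the $i$-th element $s_i$ of $S$, the number of elements of $[n]$ below it is $s_i - 1$, of which exactly $i-1$ lie in $S$; therefore $\#\{m \in S^c : m < s_i\} = s_i - 1 - (i-1) = s_i - i$. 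Summing, the inversion count is $\sum_{i=1}^{k}(s_i - i) = \sum S - k(k+1)/2$, which gives $\operatorname{sgn}(\tau_S) = (-1)^{\sum S - k(k+1)/2} = (-1)^{\sum S + k(k+1)/2}$.

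Combining the two steps yields $\operatorname{sgn}(\sigma) = (-1)^{\sum S + k(k+1)/2}(-1)^{\sum T + k(k+1)/2} = (-1)^{\sum S + \sum T}$, as claimed. I do not anticipate a serious obstacle here; the only point requiring care is the bookkeeping in the factorization $\sigma = \tau_T^{-1}\tau_S$ — one must check that composing in this order indeed produces the two-line array with $S,S^c,T,T^c$ each in increasing order, rather than some reordered version — and the off-by-one in the count $\#\{m \in S^c : m < s_i\} = s_i - i$. An alternative, if one prefers to avoid the factorization, is to induct on $k$ or to build $\sigma$ by moving the elements of $T$ into place one at a time and tracking the sign change at each step, but the inversion-counting argument above is the cleanest.
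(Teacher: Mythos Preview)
Your proof is correct and uses the same factorization through the canonical block permutation as the paper does (its $\sigma_1,\sigma_2$ are your $\tau_S^{-1},\tau_T$); the only difference is that the paper computes $\operatorname{sgn}(\tau_S)$ by induction on $k$, peeling off one element of $T$ at a time, whereas you count inversions directly, which is a minor and arguably cleaner variation. One small correction to the bookkeeping you flagged: with your definition of $\tau_S$ and standard function composition the factorization should read $\sigma=\tau_T\circ\tau_S^{-1}$ rather than $\tau_T^{-1}\circ\tau_S$, but since $\operatorname{sgn}(\tau)=\operatorname{sgn}(\tau^{-1})$ this has no effect on the argument.
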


\begin{proof}
	First assume that $S = \{1,2,\ldots,k\}$. We will proceed by induction on $k$. Assume that $k=1$ and the smallest element of $T$ is $t_1$. Since the elements of $T^c$ are listed in increasing order, then the sign of $\sigma$ is $(-1)^{t_1-1}=(-1)^{t_1+1}$. For the inductive step, $t_1$ should be transposed $t_1-1+(k-1)$ times, and so ${\rm sgn}(\sigma) = (-1)^{t_1+k-2} {\rm sgn}(\tau)$ where 
	\[
	\tau = \tiny{\begin{pmatrix}
			1, \ldots, k-1 & k, \ldots, n \\ T_1 & (T_1)^c    
	\end{pmatrix}}.
	\] 
	Here $T_1$ denotes the set $T\setminus \{t_1\}$, and the elements of $T_1$ and $T_1^c$ are arranged in increasing order. Applying the inductive hypothesis to $\tau$, we infer that ${\rm sgn}(\tau) = (-1)^{\sum T_1 + \sum_{i=1}^{k-1}i}$. Therefore, 
	\[
	{\rm sgn}(\sigma)=(-1)^{t_1+k-2}(-1)^{T_1+\sum_{i=1}^{k-1}i} = (-1)^{\sum T + \sum_{i=1}^k i}.
	\]  
	For general $S$, $\sigma$ can be written as the composite of two permutations $ \sigma_1 $ and $\sigma_2$ given in two line notation by $
	\sigma_1 = \tiny{\begin{pmatrix}
			S & S^c \\ 1,\ldots,k & k+1,\ldots,n
	\end{pmatrix}}$ and $\sigma_2 = \tiny{\begin{pmatrix}
			1,\ldots,k & k+1,\ldots,n \\ T & T^c
	\end{pmatrix}}$. Thus, the sign of $\sigma $ is the product of the signs of $\sigma_1$ and $\sigma_2$. Since a permutation has the same sign as its inverse, we conclude that $ {\rm sgn}(\sigma)= (-1)^{\sum T + \sum_{i=1}^k i} (-1)^{\sum S + \sum_{i=1}^k i} = (-1)^{\sum T + \sum S}$ as desired.
\end{proof}
\begin{proof}[Proof of Theorem~\ref{thm:Laplace}]
	For simplicity, we assume $S = [k]$. We denote by $c_i$ the $i^{{\rm th}}$ column of the matrix $A$ and by $e_i$ the vector in $\F^n$ with $i^{\rm th}$ entry equal to one and zero otherwise. Then, 
	\[
	c_1 \wedge c_2 \wedge \cdots \wedge c_n= {\rm det}(A) e_1\wedge \cdots \wedge e_n  
	\] 
	Let $c_i = \sum_{j=1}^n a_{ji} e_j$. Then the wedge product of the first $k$ columns can be written as
	\[
	c_1 \wedge \cdots \wedge c_k = \sum_{j=1}^n a_{j1}e_j \wedge \cdots \wedge \sum_{j=1}^n a_{jn}e_j = \sum_{\substack{T\subset[n]\\|T| = k}} (\sum_{j\in T} a_{j1}e_j \wedge \cdots \wedge \sum_{j\in T} a_{jn}e_k) = \sum_{\substack{T\subset[n]\\ |T| = k}} A_{T,S} e_T
	\]
	where $e_T = e_{t_1}\wedge \cdots \wedge e_{t_k}$ and $t_1,\ldots,t_k$ are the elements in $T$ arranged in increasing order.
	Therefore
	\[
	(c_1\wedge \cdots\wedge c_k)\wedge(c_{k+1} \wedge \cdots \wedge c_{n}) = \sum_{\substack{T\subset[n]\\ |T| = k}} A_{T,S} e_T \wedge \sum_{\substack{T\subset[n]\\ |T| = k}} A_{T^c,S^c} e_{T^c}. 
	\]
	We denote by $\sigma_{S,T}$ the permutation written in two line notation as $\tiny\begin{pmatrix}
		S & S^c \\ T & T^c
	\end{pmatrix}$. Using Lemma~\ref{lem:SignofPer}, ${\rm sgn}(\sigma_{T,S}) =  (-1)^{\sum T + \sum S}$. Thus
	\[
	\det(A) e_{[n]} = c_1 \wedge \cdots \wedge c_n =  \sum_{\substack{T\subset[n]\\ |T| = k}} (-1)^{\sum T + \sum S} A_{T,S} A_{T^c,S^c} e_{[n]}.
	\] 
	Therefore $\det(A)  = \sum_{\substack{T\subset[n]\\ |T| = k}} (-1)^{\sum T + \sum S} A_{T,S} A_{T^c,S^c}$ as desired.
\end{proof}

\section{Matrices with a single point fibers}\label{sec:Main}
In this section we characterize $\mathscr{D}(n)$, the set of all matrices with a single point fiber up to diagonal equivalence.
\begin{thm}\label{thm:Main}
	Let $A$ be an $n\times n$ matrix with entries in a field $\F$ with $n\geq 4$. The matrix $A$ belongs to the set $\mathscr{D}(n)$ if and only if  $A$ is irreducible and either 
	\begin{enumerate}
		\item[(a)] $A$ is diagonally equivalent to a symmetric matrix or
		\item[(b)] $A$ does not have any cut, that is ${\rm rk}\left(A[X,X^c]\right)\geq 2$ or ${\rm rk}\left(A[X^c,X]\right)\geq 2$ for all $X \subset [n]$, with $2\leq|X|\leq n-2$.
	\end{enumerate}	 
\end{thm}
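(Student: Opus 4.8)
The plan is to prove the equivalence by handling each direction separately, the forward direction ($A\in\mathscr{D}(n)\Rightarrow$ the stated structure) being the substantial one; I would phrase it contrapositively as: a reducible matrix, and an irreducible matrix that has a cut but is not diagonally equivalent to a symmetric matrix, both have a point in their fiber that is not a diagonal conjugate of $A$ or of $A^{T}$.

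For the backward direction there is little to do. If $A$ is irreducible with no cut this is Loewy's theorem \cite{Loewy}, which I would simply invoke. If $A$ is irreducible and diagonally equivalent to a symmetric matrix, then $A\in\mathscr{D}(n)$ follows from the single-point property of irreducible symmetric matrices, i.e.\ from Theorem~\ref{thm:FiberSym}(1) (alternatively one first conjugates $A$ to a symmetric matrix and argues directly with principal minors). The reducible case is also elementary: permuting to $A=\left(\begin{smallmatrix}B&C\\0&D\end{smallmatrix}\right)$ with $B,D$ square, every straddling principal minor of $\left(\begin{smallmatrix}B&C'\\0&D\end{smallmatrix}\right)$ is a product of a principal minor of $B$ and one of $D$, irrespective of the upper-right block, so this matrix lies in the fiber of $A$ for every $C'$; taking $C'$ with exactly one of $C,C'$ nonzero, its block shape is not matched by any diagonal conjugate of $A$ or $A^{T}$, so $A\notin\mathscr{D}(n)$.

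The core is the cut-but-not-symmetric case, and the tool is the generalized Laplace expansion (Theorem~\ref{thm:Laplace}). Fix a cut $X$ and write the rank-$\le 1$ blocks as outer products $A[X,X^c]=uv^{T}$, $A[X^c,X]=pq^{T}$. Applying Theorem~\ref{thm:Laplace} to each principal submatrix $A[S,S]$ relative to the splitting $S\cap X$ versus $S\cap X^c$, and using that a term swapping two or more indices across a rank-$\le1$ block vanishes, every principal minor acquires the bilinear shape
\[
A_S \;=\; \det\!\big(A[S\cap X,S\cap X]\big)\,\det\!\big(A[S\cap X^c,S\cap X^c]\big)\;-\;\Phi(S\cap X)\,\Psi(S\cap X^c),
\]
with $\Phi(Y)=(q|_Y)^{T}A[Y,Y]^{\mathrm{adj}}(u|_Y)$ for $Y\subseteq X$ and $\Psi$ the analogous expression in $A[X^c,X^c],v,p$. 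Hence replacing $(u,q,v,p)$ by $(\hat u,\hat q,\hat v,\hat p)$ — and, if needed, replacing the diagonal blocks by other matrices with the same principal minors — yields a matrix $B$ in the fiber of $A$ whenever $\widehat\Phi=\rho\,\Phi$ and $\widehat\Psi=\rho^{-1}\Psi$ for one scalar $\rho$. I would then produce such a modification not realized by a diagonal conjugation: comparing coefficients of $\Phi$ in decreasing degree, the top degree forces $\hat q_a\hat u_a=\rho\,q_a u_a$, and the next level gives, for $a\ne a'$, a relation of the form $(\rho-1)\,(\rho\,c_{aa'}-c_{a'a})=0$ with $c_{aa'}$ proportional to $q_a u_{a'}A[X,X]_{aa'}$. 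If $A$ is diagonally equivalent to a symmetric matrix one checks that $u,q$ are proportional along $X$ and $A[X,X]$ is symmetric, forcing $c_{aa'}=c_{a'a}$ and then $\rho=1$ with $\hat q\propto q$, $\hat u\propto u$, i.e.\ only a diagonal conjugate. Otherwise the second root $\rho=c_{a'a}/c_{aa'}$ (or, in degenerate configurations, freedom coming from a cut of $A[X,X]$ or $A[X^c,X^c]$, extracted by induction on $n$ through the bordered matrices $\left(\begin{smallmatrix}A[X,X]&u\\q^{T}&0\end{smallmatrix}\right)$) gives a genuinely different $(\hat u,\hat q,\hat v,\hat p)$ and a new point $B$.

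The hard part will be the last step: turning this coefficient comparison into a clean dichotomy valid in every configuration. I expect the real work to be (i) the sign bookkeeping in the generalized Laplace reduction, sharp enough to pin down $\Phi$ and $\Psi$; (ii) an induction on $n$ covering the case where a diagonal block is reducible or itself carries a cut, with the gluing along $X$ kept compatible; and (iii) the non-degeneracy check showing the constructed $B$ is not a diagonal conjugate of $A$ or of $A^{T}$ — this is precisely where non-symmetry is used. The reducible case and Loewy's theorem are off the shelf.
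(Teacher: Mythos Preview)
Your bilinear reduction is correct and is a genuinely different entry point from the paper's: for $Y=S\cap X$, $Z=S\cap X^c$ one has $A_S=\det P[Y]\det Q[Z]-\Phi(Y)\Psi(Z)$ with $\Phi(Y)=q_Y^{T}P[Y]^{\mathrm{adj}}u_Y$ and $\Psi(Z)=v_Z^{T}Q[Z]^{\mathrm{adj}}p_Z$, so the problem does reduce to manufacturing new data with the same $\Phi,\Psi$. The gap is in the manufacturing step. Your ``second root'' computation is only the $|Y|=2$ layer: writing $\hat u_a=s_au_a$, $\hat q_a=t_aq_a$ with $s_at_a=\rho$, the size-two constraint gives $(r-1)(r\,c_{aa'}-c_{a'a})=0$ for the \emph{ratio} $r=t_a/t_{a'}$, not for a global $\rho$; you have not argued that the pairwise roots $c_{a'a}/c_{aa'}$ satisfy the cocycle condition needed to assemble a single vector $\hat q$, nor that the result meets the constraints for $|Y|\ge 3$, and the induction on $n$ is only gestured at. (Also, ``$A$ diagonally equivalent to symmetric'' yields $D_X^{2}u\propto q$ and $D_XPD_X^{-1}$ symmetric, not $u\propto q$ and $P$ symmetric, so that check needs adjusting.) A one-line construction is hiding in your setup and bypasses all of this: $B=\left(\begin{smallmatrix}P&up^{T}\\vq^{T}&Q^{T}\end{smallmatrix}\right)$ and $B'=\left(\begin{smallmatrix}P^{T}&qv^{T}\\pu^{T}&Q\end{smallmatrix}\right)$ each preserve $\Phi$ and $\Psi$ identically, and the dichotomy is that if \emph{both} happen to be diagonally equivalent to $A$, the resulting scalar relations force $G_{ij}=\alpha_{ij}G_{ji}$ on the adjugate and Lemma~\ref{lem:SymEqu} finishes.

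The paper runs exactly this two-swap dichotomy, but at the adjugate level rather than the matrix level. Lemma~\ref{lem:CutFactor} factors the off-diagonal entries of $G=(\diag(x)+A)^{\mathrm{adj}}$ as $G_{ij}=(-1)^{i}a_ib_j$ and $G_{ji}=(-1)^{i}c_jd_i$ for $i\in X$, $j\in X^c$; Lemma~\ref{lem:Construction} swaps $b_j\leftrightarrow c_j$ (transposing the $X^c$-block of $G$) and uses the determinantal characterization Theorem~\ref{thm:AV} to certify that the modified matrix is again an adjugate, hence comes from some $B$ in the fiber. If this $B$ is diagonally similar to $A$ one swaps $a_i\leftrightarrow d_i$ instead; if that too is diagonally similar, Lemma~\ref{lem:SymEqu} forces $A$ to be diagonally similar to a symmetric matrix. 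Your matrix-level route is more elementary in that it avoids Theorem~\ref{thm:AV} altogether, but the paper's adjugate packaging makes the two candidate swaps and the concluding symmetric-or-not dichotomy completely explicit, with no system to solve.
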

\begin{proof}[Proof of ($\Leftarrow$)] Follows from \cite[Theorem~1]{Loewy} and Lemma~\ref{lem:FiberSymIrr}.
\end{proof}	
To prove the other directions, we will use the following lemma repeatedly. The reader is referred to \cite{HartfielLoewy} for the proof.
\begin{lem}\label{lem:NonzeroAdj}[Theorem~1 in \cite{HartfielLoewy}]
	Let $A$ be an $n\times n$ irreducible matrix with entries in a field $\F$. Let $M=(\diag\{x_1,\hdots,x_n\}+A)$ and $G=M^{\rm adj}$, the adjugate matrix of $M$. Then the entries of $G$ are nonzero.
\end{lem}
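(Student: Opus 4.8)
The plan is to prove the sharper statement that every entry $G_{ij}$ of $G=M^{\rm adj}$ is a \emph{nonzero} polynomial in $\F[x_1,\dots,x_n]$, where $M=\diag\{x_1,\dots,x_n\}+A$ (we may assume $n\geq 2$, the case $n=1$ being trivial). The diagonal entries are immediate: $G_{ii}$ is the determinant of the principal submatrix of $M$ on $[n]\setminus\{i\}$, which is $\diag\{x_k:k\neq i\}$ plus a constant matrix, so its coefficient of $\prod_{k\neq i}x_k$ is $1$. So the content lies in the off-diagonal entries: for $i\neq j$, $G_{ij}$ equals, up to sign, $\det\big(M(j,i)\big)$, the determinant of the $(n-1)\times(n-1)$ matrix obtained from $M$ by deleting row $j$ and column $i$ (since $A^{T}$ is irreducible whenever $A$ is, the orientation is immaterial), and I must show this is not the zero polynomial.

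Irreducibility means precisely that the digraph on $[n]$ with an arc $k\to l$ whenever $k\neq l$ and $A_{kl}\neq 0$ is strongly connected, so there is a directed path from $i$ to $j$; fix a \emph{shortest} one, $i=v_0\to v_1\to\cdots\to v_m=j$. Its vertices are distinct, each $A_{v_{a-1},v_a}\neq 0$, and — by minimality — $A_{v_a,v_b}=0$ whenever $b\geq a+2$. The idea is to isolate inside $\det\big(M(j,i)\big)$ the coefficient of the squarefree monomial $\mu:=\prod_{k\in[n]\setminus P}x_k$, where $P=\{v_0,\dots,v_m\}$ and $P^c=[n]\setminus P$, and show it is nonzero. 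Expand $\det\big(M(j,i)\big)$ by the generalized Laplace theorem (Theorem~\ref{thm:Laplace}, after the obvious relabelling of rows and columns by $[n-1]$) along the rows indexed by $P^c$; this is a legitimate row subset since the rows of $M(j,i)$ are indexed by $[n]\setminus\{v_m\}$ and the columns by $[n]\setminus\{v_0\}$, both containing $P^c$. Each resulting term is, up to sign, $\det\big(M(j,i)[P^c,C]\big)\cdot\det\big(M(j,i)[\{v_0,\dots,v_{m-1}\},C^c]\big)$ over column sets $C$ with $|C|=|P^c|$.

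A variable $x_k$ with $k\in P^c$ occurs in $M(j,i)$ only in row $k$ and column $k$, hence never in the second factor (whose rows lie in $P$) and in the first factor only when $k\in C$. So each term has degree at most $|P^c|$ in the variables $\{x_k:k\in P^c\}$, with equality only for $C=P^c$, and only that term can contribute $\mu$; in it, $\det\big(M(j,i)[P^c,P^c]\big)=\det\big(\diag\{x_k:k\in P^c\}+A[P^c,P^c]\big)$ carries $\mu$ with coefficient $1$. Thus the coefficient of $\mu$ in $\det\big(M(j,i)\big)$ is $\pm\det\big(M(j,i)[\{v_0,\dots,v_{m-1}\},\{v_1,\dots,v_m\}]\big)$. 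Reindex this $m\times m$ minor so that its $a$-th row is $v_{a-1}$ and its $b$-th column is $v_b$; its $(a,b)$-entry is $M_{v_{a-1},v_b}$, equal to $A_{v_{a-1},v_b}$ unless $v_{a-1}=v_b$, i.e.\ $b=a-1$. For $b>a$ one has $b\geq(a-1)+2$, so $A_{v_{a-1},v_b}=0$ by minimality, while the $(a,a)$-entry is $A_{v_{a-1},v_a}\neq 0$; hence the matrix is lower triangular with nonzero diagonal and determinant $\prod_{a=1}^m A_{v_{a-1},v_a}\neq 0$. So $\mu$ appears in $\det\big(M(j,i)\big)$ with nonzero coefficient, whence $G_{ij}\neq 0$.

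The crux — the only nonformal point — is the nonvanishing of that complementary minor: for an \emph{arbitrary} path from $i$ to $j$ it is a genuine, possibly singular, submatrix of $M$, so one would only learn that \emph{some} Laplace term is nonzero, which says nothing about the sum. Choosing a shortest path is exactly what triangularizes the minor and removes any cancellation. Everything else — the Laplace sign bookkeeping, the relabelling needed to invoke Theorem~\ref{thm:Laplace}, and the degree count that isolates $\mu$ — is routine, and the argument works over any field since it only uses that a product of nonzero field elements is nonzero.
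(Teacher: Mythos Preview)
Your proof is correct. Note, however, that the paper does not give its own proof of this lemma: it simply refers the reader to \cite{HartfielLoewy} (``The reader is referred to \cite{HartfielLoewy} for the proof''), so there is no in-paper argument to compare against. What you have written is a clean, self-contained proof: pick a \emph{shortest} directed path $i=v_0\to v_1\to\cdots\to v_m=j$ in the digraph of $A$, isolate the monomial $\mu=\prod_{k\notin P}x_k$ (with $P=\{v_0,\dots,v_m\}$) in $\det(M(j,i))$ via Laplace expansion along the rows $P^c$, and observe that its coefficient is, up to sign, the minor $M[\{v_0,\dots,v_{m-1}\},\{v_1,\dots,v_m\}]$, which the shortest-path condition forces to be lower triangular with diagonal $\prod_a A_{v_{a-1},v_a}\neq 0$. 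This is essentially the standard path/cycle-cover argument behind the Hartfiel--Loewy result, recast through the generalized Laplace expansion already available in the paper as Theorem~\ref{thm:Laplace}.

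One purely cosmetic remark: when you write ``the coefficient of $\mu$ in $\det(M(j,i))$ is $\pm\det\big(M(j,i)[\{v_0,\dots,v_{m-1}\},\{v_1,\dots,v_m\}]\big)$'', the right-hand side is \emph{a priori} a polynomial in $\{x_{v_1},\dots,x_{v_{m-1}}\}$ (the subdiagonal entries are $x_{v_b}+A_{v_b,v_b}$), so the identity is cleanest read as the coefficient of $\mu$ in the $P^c$-variables with coefficients in $\F[x_k:k\in P\setminus\{i,j\}]$. Your subsequent triangularity computation then shows this polynomial is actually the constant $\prod_{a=1}^m A_{v_{a-1},v_a}$, so nothing is lost; phrasing it this way up front would just make the logic slightly tighter. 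The edge case $P=[n]$ (so $P^c=\emptyset$ and $\mu=1$) is also fine, since then the Laplace step is vacuous and $M(j,i)$ itself is the lower-triangular minor.
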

\begin{lem}\label{lem:CutFactor}
	Let $A$ be an $n\times n$ irreducible matrix with entries in a field $\F$ with $n\geq 4$. Suppose that $A$ has a cut $X\subset [n]$. Let 
	$G = (\diag\{x_1,\hdots,x_n\}+A)^{\rm adj}$. Then for all $i \in X$ and $j \in X^c$ we have
	\[
	G_{ij} = (-1)^ia_ib_j
	\]
	where $a_i \in \F[x_k:k\in X\setminus\{i\}]$, $b_j \in \F[x_k:k\in X^c\setminus\{j\}]$ and both are nonzero polynomials.
\end{lem}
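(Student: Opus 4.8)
The plan is to compute $G_{ij}=\det(M(i,j))$ directly via the generalized Laplace expansion (Theorem~\ref{thm:Laplace}), using the rank hypotheses on the cut to collapse the expansion. Write $k=|X|$, so $2\le k\le n-2$ and $|X^c|=n-k\ge 2$. Fix $i\in X$, $j\in X^c$, and regard $M(i,j)$ as the $(n-1)\times(n-1)$ matrix with rows indexed by $[n]\setminus\{i\}$ and columns by $[n]\setminus\{j\}$ (reindexing to $[n-1]$ if one wants to apply Theorem~\ref{thm:Laplace} verbatim). Expand $\det(M(i,j))$ along the $k-1$ rows indexed by $X\setminus\{i\}$: this writes $\det(M(i,j))$ as a signed sum, over column subsets $T\subseteq[n]\setminus\{j\}$ with $|T|=k-1$, of products $\det\big(M(i,j)[X\setminus i,\,T]\big)\cdot\det\big(M(i,j)[X^c,\,T^c]\big)$, where $T^c$ denotes the complement of $T$ in $[n]\setminus\{j\}$.

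The first key point is that the cut condition forces all but $k$ of these terms to vanish. The columns of $M(i,j)[X\setminus i,T]$ whose index lies in $T\cap X^c$ are columns of the off-diagonal block $A[X,X^c]$, which has rank at most one; if $|T\cap X^c|\ge 2$ two of them are proportional and the first factor vanishes. Likewise the columns of $M(i,j)[X^c,T^c]$ whose index lies in $T^c\cap X$ are columns of $A[X^c,X]$, again of rank at most one; since $|T^c\cap X|=k-|T\cap X|=1+|T\cap X^c|$, a nonzero term forces $|T\cap X^c|=0$. Hence the only surviving terms have $T=X\setminus\{m\}$ for some $m\in X$, so that $T^c=\{m\}\cup(X^c\setminus\{j\})$, and
\[
\det(M(i,j))=\sum_{m\in X}(-1)^{\epsilon(m)}\,\det\big(M_X[X\setminus i,\,X\setminus m]\big)\cdot\det\big(M(i,j)[X^c,\,\{m\}\cup(X^c\setminus j)]\big),
\]
where $M_X=\diag\{x_k:k\in X\}+A[X,X]$ and $\epsilon(m)$ collects the Laplace signs.

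The second key point is a multilinearity trick. Since $A$ is irreducible, $A[X^c,X]$ is nonzero (otherwise $A$ is block upper triangular after reordering), so all its columns are scalar multiples $c_m\gamma$ of a single nonzero vector $\gamma\in\F^{X^c}$. In $\det\big(M(i,j)[X^c,\{m\}\cup(X^c\setminus j)]\big)$ the column indexed by $m$, restricted to the rows $X^c$, is exactly the $m$-th column of $A[X^c,X]$, hence equals $c_m\gamma$; expanding this determinant linearly in that column gives $c_m\cdot b$, with $b$ the determinant obtained by replacing that column with $\gamma$, which is independent of $m$. Factoring out $b$ gives $G_{ij}=b\cdot\big(\sum_{m\in X}(-1)^{\epsilon(m)}c_m\det(M_X[X\setminus i,X\setminus m])\big)$. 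Tracking the diagonal variables: $b$ involves only $x_k$ with $k\in X^c\setminus\{j\}$ (rows $X^c$, column $j$ absent), while $\det(M_X[X\setminus i,X\setminus m])$ involves only $x_k$ with $k\in X\setminus\{i,m\}\subseteq X\setminus\{i\}$, so the parenthesized sum lies in $\F[x_k:k\in X\setminus\{i\}]$. Absorbing the overall Laplace sign so that the $X$-side factor carries a $(-1)^i$ yields $G_{ij}=(-1)^ia_ib_j$ with $a_i\in\F[x_k:k\in X\setminus\{i\}]$ and $b_j\in\F[x_k:k\in X^c\setminus\{j\}]$. Finally $G_{ij}\ne 0$ by Lemma~\ref{lem:NonzeroAdj}, and since $\F[x_1,\dots,x_n]$ is an integral domain, both $a_i$ and $b_j$ are nonzero.

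The main obstacle is the bookkeeping in the first two steps: the rank-one counting must be done carefully enough that exactly the $k$ terms indexed by $m\in X$ can be nonzero, and one must check that the multilinear factor $b$ is genuinely independent of $m$ and that the variable supports of the two factors are disjoint as stated. The signs $\epsilon(m)$, together with the reindexing of $M(i,j)$ needed to invoke Theorem~\ref{thm:Laplace} in labeled form, are routine but must be tracked to land the precise normalization $(-1)^i$.
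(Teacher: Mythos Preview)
Your proposal is correct and follows essentially the same strategy as the paper: compute $G_{ij}$ by the generalized Laplace expansion, use the rank-one cut hypothesis to collapse the sum to a single one-parameter family of terms, then extract a common factor via multilinearity in the lone column coming from a rank-one off-diagonal block, and finish with Lemma~\ref{lem:NonzeroAdj}. The only difference is a cosmetic dualization: the paper expands along the $k$ rows $X$ (so surviving terms are indexed by $\ell\in X^c$, and multilinearity in the $\ell$-column of $A[X,X^c]$ gives $M_{X,\,X_i\cup\{\ell\}}=c_\ell\,a_i$, after which $b_j=\sum_{\ell}(-1)^\ell c_\ell\,M_{(X^c)_j,(X^c)_\ell}$), whereas you expand along the $k-1$ rows $X\setminus\{i\}$ and instead factor the complementary block using ${\rm rk}\,A[X^c,X]=1$; your first vanishing step (via $A[X,X^c]$) is in fact redundant, since your second one already forces $|T\cap X^c|=0$.
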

\begin{proof}
	Since $A$ is irreducible and has a cut $X$, then ${\rm rk}(A[X,X^c]) = {\rm rk}(A[X^c,X])=1$. Let $M = (A + \diag\{x_1,\hdots,x_n\})$. We denote by $X_i$ the set $X\setminus \{i\}$. For $S,T\subset [n]$, let $M_{S,T}$ denote the determinant of the submatrix $M[S,T]$ indexed by $S$ on the rows and $T$ on the columns. Since ${\rm rk}(A[X,X^c])=1$, then for each $i\in X$ there exists $a_i\in \F[x_k: k \in X_i]$ such that $M_{X,X_i\cup \{j\}} = c_j a_i$ for all $j \in X^c$ and for some $c_j \in \F$. Now using Theorem~\ref{thm:Laplace} we have:
	\[
	G_{ij}  = \sum_{\substack{T\subset [n]_i\\ |T| = k}} (-1)^{\sum X+\sum T } M_{X,T} M_{(X^c)_j,(T^c)_i} 
	\]
	for $i \in X$ and $j \in X^c$. Notice that for all $T \subset [n]_i$ such that $|T\cap X^c|\geq 2$, the submatrix $M[X,T]$ is not full rank, since ${\rm rk}(A[X,X^c]) = 1$. Thus for all such $T$, $M_{X,T} = 0$. Therefore, $G_{ij}$ can be written as:
	\begin{align*}
		G_{ij}  & = \sum_{\ell\in X^c} (-1)^{ \sum X+\sum X - i + \ell } M_{X,\ell \cup X_i} M_{(X^c)_j,(X^c)_{\ell} }\\
		& = (-1)^i a_i \sum_{\ell\in X^c} (-1)^{\ell} c_\ell M_{(X^c)_j,(X^c)_{\ell}}
	\end{align*}
	using $M_{X,\ell \cup X_i } = c_\ell a_i$. Letting $b_j = \displaystyle\sum_{\ell\in X^c} (-1)^{\ell} c_\ell M_{(X^c)_j,(X^c)_{\ell},  }$, we obtain the desired result. By Lemma~\ref{lem:NonzeroAdj}, $G_{ij} \neq 0$. Therefore, $a_i$ and $b_j$ are both nonzero.
\end{proof}	

\begin{lem}\label{lem:SymEqu}
	Let $A$ be an $n\times n$ irreducible matrix with entries in a field $\F$ with $n\geq 4$. Consider $G= (\diag\{x_1,\hdots,x_n\}+A)^{\rm adj}$. If $A$ has a cut $X\subset [n]$ such that for each $i \in X$ and $j\in X^c$ there exists $\alpha_{ij}\in \F$ satisfying $G_{ij} = \alpha_{ij} G_{ji}$, then $A$ is diagonally similar to a symmetric matrix.
\end{lem}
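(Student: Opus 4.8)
The goal is to upgrade the scalar proportionality $G_{ij} = \alpha_{ij}G_{ji}$ across a single cut $X$ into a genuine diagonal similarity $A = DA^T D^{-1}$ for some invertible diagonal $D$. The plan is to reduce everything to the $2\times 2$ block structure that a cut imposes on the adjugate, and then to exploit the multiaffine structure of the entries of $G$ to pin down the scalars $\alpha_{ij}$.

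First I would invoke Lemma~\ref{lem:CutFactor}: for $i\in X$, $j\in X^c$ the entry $G_{ij}$ factors as $(-1)^i a_i b_j$ with $a_i\in\F[x_k:k\in X\setminus\{i\}]$ and $b_j\in\F[x_k:k\in X^c\setminus\{j\}]$ nonzero, and symmetrically $G_{ji} = (-1)^j a'_j b'_i$ with $a'_j\in\F[x_k:k\in X^c\setminus\{j\}]$ and $b'_i\in\F[x_k:k\in X\setminus\{i\}]$ nonzero (applying the lemma to the cut $X^c$, which is again a cut of the same matrix since the rank conditions are symmetric in $X,X^c$). The hypothesis $G_{ij}=\alpha_{ij}G_{ji}$ then reads $(-1)^i a_i b_j = \alpha_{ij}(-1)^j a'_j b'_i$. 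Comparing the variable supports — the left side is a polynomial in $\{x_k:k\neq i,j\}$ whose $X$-part is $a_i$ and whose $X^c$-part is $b_j$, and similarly on the right — forces $a_i = \lambda_i b'_i$ and $b_j = \mu_j a'_j$ for scalars $\lambda_i,\mu_j\in\F^\times$ (here I use that $\F[x_k:k\in X\setminus\{i\}]$ and $\F[x_k:k\in X^c\setminus\{j\}]$ are coprime-support polynomial rings inside a UFD, so a factorization of a product across the two supports is unique up to units). Consequently $\alpha_{ij}(-1)^j\lambda_i^{-1}\mu_j^{-1} = (-1)^i$, so $\alpha_{ij}$ depends on $i$ and $j$ only through $\lambda_i$ and $\mu_j$; writing $d_i$ for the relevant combination, this says $G_{ij} = (d_i/d_j)\,G_{ji}$ for a single vector of nonzero scalars $(d_i)_{i\in[n]}$ — at least for the cross-cut pairs, and one checks the within-block pairs come along for free because $G$ restricted to $X$ (resp. $X^c$) is, up to lower-order corrections governed by the rank-one off-diagonal blocks, the adjugate of a smaller matrix and the Dodgson/Rayleigh identity $\Delta_{ij}(f) = G_{ij}G_{ji}$ of Theorem~\ref{thm:Dodgson} constrains the remaining ratios consistently.

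Next I would translate the adjugate proportionality back to the matrix $A$. Set $D = \diag\{d_1,\dots,d_n\}$ and $B = D^{-1}AD$; then $\diag\{x_1,\dots,x_n\}+B = D^{-1}(\diag\{x_1,\dots,x_n\}+A)D$, so its adjugate is $D^{-1}G D$ up to the scalar $\det D/\det D = 1$ — more precisely the adjugate scales as $(\det D)\,D^{-1}GD$, and since conjugation by $D$ replaces $G_{ij}$ by $(d_j/d_i)G_{ij}$, the relation $G_{ij} = (d_i/d_j)G_{ji}$ becomes: the adjugate of $\diag\{x\}+B$ is symmetric, i.e. $(\diag\{x\}+B)^{\rm adj}$ equals its own transpose. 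Taking adjugates again (or using that a matrix whose adjugate is symmetric must, when it has nonzero determinant as a polynomial, itself be symmetric — the adjugate of the adjugate recovers $\det(M)^{n-2}M$) yields that $\diag\{x\}+B$ is symmetric, hence $B$ is symmetric, hence $A = DBD^{-1}$ is diagonally similar to a symmetric matrix, as claimed.

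The main obstacle I anticipate is the bookkeeping in the middle step: showing the proportionality constants $\alpha_{ij}$ genuinely assemble into a single consistent ratio pattern $d_i/d_j$ across \emph{all} index pairs, not just the cross-cut ones, and verifying there is no sign obstruction coming from the $(-1)^i$ factors in Lemma~\ref{lem:CutFactor}. The cleanest route is probably to first conjugate by a diagonal matrix that makes all the cross-cut entries of $G$ symmetric (this is what the previous paragraph does), then observe that by Lemma~\ref{lem:NonzeroAdj} every entry of $G$ is nonzero, apply $\Delta_{ij}(f) = G_{ij}G_{ji}$ together with the symmetry already established on the cross pairs and the fact that $f$ is unchanged under $M\mapsto M^T$, and conclude that the remaining ratios $G_{ij}/G_{ji}$ for $i,j$ in the same block are forced to be $1$ as well because $f = \det(\diag\{x\}+B)$ would otherwise fail to match $\det(\diag\{x\}+B^T)$; irreducibility of $A$ (hence of $B$) is what prevents any degenerate splitting that would leave these ratios free. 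I would isolate this consistency check as the technical heart of the argument and keep the reduction to symmetry of the doubly-applied adjugate as a short formal coda.
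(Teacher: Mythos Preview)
Your overall strategy matches the paper's: use Lemma~\ref{lem:CutFactor} to extract a rank-one structure on the cross-cut scalars $\alpha_{ij}$, extend the proportionality $G_{ij}=\alpha_{ij}G_{ji}$ to the within-block pairs, and then conjugate by a diagonal matrix to symmetrize $G$ (hence $A$). The UFD/support argument you give for the rank-one form of $\alpha_{ij}$ when $i\in X$, $j\in X^c$ is correct and equivalent to the paper's first step.

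The gap is in the within-block extension, which you correctly flag as the ``main obstacle'' but then do not resolve. Both mechanisms you propose fail. The Dodgson identity $\Delta_{ij}(f)=G_{ij}G_{ji}$ pins down only the \emph{product} $G_{ij}G_{ji}$, not the ratio, so it cannot by itself force $G_{ij}=G_{ji}$ after conjugation. And the observation that $\det(\diag\{x\}+B)=\det(\diag\{x\}+B^T)$ is vacuous: the determinant is transpose-invariant for \emph{every} matrix, so this equality imposes no constraint whatsoever on the within-block entries of $B$. Irreducibility does not rescue this. What the paper actually uses here is the three-index resultant identity from Theorem~\ref{thm:AV}(b), namely ${\rm res}_{x_j}(G_{ik},f)=G_{ij}G_{jk}$. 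Taking $i,j\in X$ and $k\in X^c$, one computes the left side two ways: directly it is $G_{ij}G_{jk}=\alpha_{jk}G_{ij}G_{kj}$, while from $G_{ik}=\alpha_{ik}G_{ki}$ it is $\alpha_{ik}\,{\rm res}_{x_j}(G_{ki},f)=\alpha_{ik}G_{kj}G_{ji}$. Comparing yields $G_{ij}=(\alpha_{ik}/\alpha_{jk})G_{ji}$, which is exactly the within-block proportionality you need. This resultant identity is the missing ingredient in your outline; none of the tools you invoke can substitute for it.

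There is also a minor arithmetic slip in your conjugation step: for $D^{-1}GD$ to be symmetric you need $(d_j/d_i)G_{ij}=(d_i/d_j)G_{ji}$, i.e.\ $\alpha_{ij}=(d_i/d_j)^2$, not $\alpha_{ij}=d_i/d_j$. This is why the paper's construction of $D$ involves $\sqrt{\alpha_{jk}}$ rather than $\alpha_{jk}$ itself.
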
	
\begin{proof}
	Lemma~\ref{lem:CutFactor} implies that for all $i,j \in X$ and $k,\ell\in X^c$ we have $G_{ik}/G_{jk} = G_{i\ell}/G_{j\ell}$ and $G_{ki}/G_{kj} = G_{\ell i}/G_{\ell j}$. Therefore, $\alpha_{ik}/\alpha_{jk} = \alpha_{i\ell}/\alpha_{j\ell}$.
	Let $i, j \in X$. We claim that  $G_{ij} = \alpha_{ik}/\alpha_{jk} G_{ji}$ for any $k \in X^c$. Using Theorem~\ref{thm:Dodgson}, $\Delta_{ij}(f) = G_{ij} G_{ji}$. Then 
	\begin{align*}
	{\rm res}_{x_j}(G_{ik},f) = G_{ij} G_{jk} = \alpha_{jk} G_{ij} G_{kj},
	\end{align*}
	by Theorem~\ref{thm:AV}. Since $G_{ik} = \alpha_{ik} G_{ki}$, then
	\[{\rm res}_{x_j}(G_{ik},f) = {\rm res}_{x_j}(\alpha_{ik} G_{ki},f)=\alpha_{ik}{\rm res}_{x_j}(G_{ki},f)=\alpha_{ik} G_{kj} G_{ji}.\]
	The two equations above now give $G_{ij} = \alpha_{ik}/\alpha_{jk} G_{ji}$. Using similar argument, one can prove that $G_{ij} = \alpha_{kj}/\alpha_{ki} G_{ji}$ for $i,j \in X^c$. Fix $i \in X$ and $k \in X^c$ and let $D$ be the diagonal matrix such that $D_{kk} = 1$, $D_{jj} = \sqrt{\alpha_{jk}}$ if $j \in X$, and $D_{jj} = \sqrt{\alpha_{ik}}/\sqrt{\alpha_{ij}}$ if $j \in X^c$. Since for all $i,j \in X$ and $k,\ell\in X^c$, ${\alpha_{ik}}/{\alpha_{i\ell}}={\alpha_{jk}}/{\alpha_{j\ell}}$, then  $D^{-1} G D$ is a symmetric matrix and so is $D A D^{-1}$ as desired.
\end{proof}

If an irreducible matrix $A$ has a cut $X=[k]$, then Lemma~\ref{lem:CutFactor} states that the matrix $G = (\diag\{x_1,\hdots,x_n\}+A)^{\rm adj}$ can be written as 
\begin{equation*}
	G = \begin{pmatrix} P & R \\ T & Q \end{pmatrix}
\end{equation*} 
where $P$ is a $k\times k$ submatrix, $R_{ij} = (-1)^i a_ib_j$ with $a_i \in \F[x_k: k \in X]$ and $b_j \in \F[x_k:k\in X^c]$, and $T_{ij} = (-1)^j c_i d_j$ with $c_i \in \F[x_k:x_k \in X^c]$ and $d_j \in \F[x_k:k \in X]$. By switching suitable factors between the entries $R_{ij}$ and $T_{ij}$, we get a new matrix $H$. This matrix is the adjugate of the matrix $(\diag\{x_1,\hdots,x_n\}+B)$ where $B$ is a point in the fiber of $A$ as the following lemma shows. We will use this operation later to find a point in the fiber of $A$ that is not diagonally equivalent to $A$.
\begin{lem}\label{lem:Construction}
	Let $A$ be an $n\times n$ irreducible matrix with entries in a field $\F$ with $n\geq 4$. Assume that $A$ has a cut $X=[k]$. Let \[
	G=(\diag\{x_1,\hdots,x_n\}+A)^{\rm adj}=\begin{pmatrix} P & R \\ T & Q \end{pmatrix}
	\]
	where $P$ is a $k\times k$ submatrix, $R_{ij} = (-1)^i a_ib_j$ with $a_i \in \F[x_k: k \in X]$ and $b_j \in \F[x_k:k\in X^c]$, and $T_{ij} = (-1)^j c_i d_j$ with $c_i \in \F[x_k:x_k \in X^c]$ and $d_j \in \F[x_k:k \in X]$. Let $H$ be the matrix defined as
	\begin{equation*}
		H =  \begin{pmatrix} P & U \\ V & Q^T \end{pmatrix}
	\end{equation*}
	with $U_{ij} = (-1)^ia_i c_j$ and $V_{ij} = (-1)^j b_i d_j$. Then $H = (\diag\{x_1,\hdots,x_n\}+B)^{\rm adj}$ for some $n\times n$ matrix $B$ such that $\det(\diag\{x_1,\hdots,x_n\}+A)=\det(\diag\{x_1,\hdots,x_n\}+B)$. 
\end{lem}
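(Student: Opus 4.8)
The goal is to show that the modified matrix $H$ (obtained from $G$ by swapping the "$b$" and "$c$" factors between the off-diagonal blocks and transposing $Q$) is again the adjugate of $\diag\{x_1,\dots,x_n\}+B$ for some $B$ with the same determinantal polynomial. The natural strategy is to invoke Theorem~\ref{thm:AV}: a multiaffine polynomial with normalized leading coefficient is a determinantal polynomial of some matrix precisely when its Rayleigh differences factor compatibly, and in that case the factors are exactly the adjugate entries. So I would set $g_{ij} := H_{ij}$ and verify the two conditions (a) and (b) of Theorem~\ref{thm:AV} for the polynomial $f = f_A = \det(\diag\{x_1,\dots,x_n\}+A)$ — note $f$ is unchanged, which is what forces $\det(\diag\{x\}+B) = \det(\diag\{x\}+A)$ once we know $B$ exists. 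The multiaffineness and degree conditions in (a) are immediate from Lemma~\ref{lem:CutFactor}'s description of the factors $a_i, b_j, c_i, d_j$ together with the fact that $P$ and $Q$ are genuine adjugate blocks of $M$.

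The computational heart is checking that $\Delta_{ij}(f) = H_{ij}H_{ji}$ for all $i\ne j$, and that the resultant condition (b) holds. I would split into cases according to where $i,j$ lie relative to the cut $X=[k]$. \emph{Case 1: $i,j\in X$.} Here $H_{ij} = P_{ij} = G_{ij}$ and $H_{ji}=G_{ji}$, so $\Delta_{ij}(f)=G_{ij}G_{ji}=H_{ij}H_{ji}$ by Theorem~\ref{thm:Dodgson}. \emph{Case 2: $i,j\in X^c$.} Now $H_{ij} = (Q^T)_{ij} = Q_{ji} = G_{ji}$ and similarly $H_{ji}=G_{ij}$, so again the product $H_{ij}H_{ji} = G_{ij}G_{ji} = \Delta_{ij}(f)$ since $\Delta_{ij}$ is symmetric in $i,j$. \emph{Case 3: $i\in X$, $j\in X^c$.} Then $H_{ij}H_{ji} = \bigl((-1)^i a_i c_j\bigr)\bigl((-1)^j b_j d_i\bigr) = \bigl((-1)^i a_i b_j\bigr)\bigl((-1)^j c_j d_i\bigr) = G_{ij}G_{ji} = \Delta_{ij}(f)$, where I used that $G_{ij}=(-1)^i a_i b_j$ and $G_{ji}=(-1)^j c_j d_i$ from Lemma~\ref{lem:CutFactor} (with the roles of the blocks $R$ and $T$ as in the hypothesis), and that the scalar factors commute. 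So condition (a) and the factorization of $\Delta_{ij}(f)$ both hold by a pure bookkeeping argument: the products $H_{ij}H_{ji}$ coincide with $G_{ij}G_{ji}$ in every case, only the \emph{distribution} of factors between the two entries has changed.

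The main obstacle is condition (b) of Theorem~\ref{thm:AV}: for every $k\notin\{i,j\}$ one needs $\mathrm{res}_{x_k}(H_{ij},f) = H_{ik}H_{kj}$. Unlike the factorization of $\Delta_{ij}$, this is not symmetric and genuinely sees which factor went where. I would again case on the positions of $i$, $j$, and the third index $\ell$ (renaming to avoid clash with $k=|X|$) relative to $X$. When all three indices are on the same side of the cut, $H$ agrees with $G$ or with $G^T$ there, and the identity follows from the corresponding identity for $G$ (which holds by Theorem~\ref{thm:AV} applied to $A$, possibly after transposing). The mixed cases are where the factor-swap must be tracked: one expands $\mathrm{res}_{x_\ell}$ using its definition $\mathrm{res}_{x_\ell}(g,h) = (g|_{x_\ell=0})\,\partial_{x_\ell}h - (h|_{x_\ell=0})\,\partial_{x_\ell}g$, uses the factored form $H_{ij} = (-1)^i a_i c_j$ (so that $x_\ell$ appears only inside $a_i$ or $c_j$ depending on whether $\ell\in X$ or $\ell\in X^c$), and matches against the known resultant identities for $G$ via $G_{ij}=(-1)^i a_i b_j$. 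Concretely, the scalars $c_j$ (resp. $b_i$) factor out of the resultant in $x_\ell$ when $\ell$ is on the opposite side, reducing the mixed identity for $H$ to the already-established identity $\mathrm{res}_{x_\ell}(a_i b_j, f) \cdot(\text{correction}) = \dots$ coming from $G$. Once (a) and (b) are verified, Theorem~\ref{thm:AV} produces the matrix $B$ with $H = (\diag\{x\}+B)^{\rm adj}$ and $f_B = f$, completing the proof; it is also worth recording that $B$ is irreducible, since by Lemma~\ref{lem:NonzeroAdj} the nonvanishing of all entries of $H$ (clear from $a_i,b_j,c_i,d_j,P,Q$ all nonzero) is equivalent to irreducibility of $B$.
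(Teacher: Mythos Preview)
Your plan is correct and essentially identical to the paper's proof: both verify conditions (a) and (b) of Theorem~\ref{thm:AV} for $f=f_A$ with $g_{ij}:=H_{ij}$, casing on the position of $i,j$ (and the third index) relative to the cut $X$, and using that the factor living on the side opposite to the resultant variable pulls out of $\mathrm{res}_{x_\ell}(\,\cdot\,,f)$ to reduce to the known identity for $G$. The paper only writes out the representative case $i\in X$, $j\in X^c$, $\ell\in X$ and declares the rest similar, exactly as you propose. One minor correction to your closing remark: Lemma~\ref{lem:NonzeroAdj} only gives the implication ``irreducible $\Rightarrow$ all adjugate entries nonzero'', not the converse, so irreducibility of $B$ should instead be deduced from Lemma~\ref{lem:IrrMatrixIrrPoly} via $f_B=f_A$ irreducible.
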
	
\begin{proof}
	Let $f = \det(\diag\{x_1,\hdots,x_n\}+A)$. Using Theorem~\ref{thm:Dodgson} we have
	\[
	\Delta_{ij}(f) = G_{ij} G_{ji} = (-1)^{i+j}a_i b_j c_j d_i=H_{ij}H_{ji},
	\] 
	for $i \in X$ and $j \in X^c$. Thus, $\Delta_{ij}(f) = H_{ij}H_{ji}$, where $H_{ij}$ and $H_{ji}$ are multiaffine for all $i,j \in [n]$. Using Theorem~\ref{thm:AV}, it remains to show that for all $k$, ${\rm res}_{x_k}(H_{ij},f) = H_{ik}H_{kj}$ for all distinct $i,j \in [n]$. We only consider the case $i\in X$, $j \in X^c$, and $k \in X$, as the other cases follow similarly. Since $b_j \in \F[x_\ell:\ell \in X^c]$ is independent of the variable $x_k$, the resultant ${\rm res}_{x_k}(G_{ij},f) = {\rm res}_{x_k}((-1)^i a_i b_j,f) =(-1)^i b_j {\rm res}_{x_k}(a_i,f)$. Also, using Theorem~\ref{thm:AV} it holds that
	\[
	{\rm res}_{x_k}(G_{ij},f) = G_{ik}G_{kj} = (-1)^k G_{ik} a_k b_j.
	\] 
	This yields ${\rm res}_{x_k}(a_i,f) = (-1)^{i+k}a_k G_{ik}$. Since $c_j \in \F[x_\ell:\ell \in X^c]$ is independent of the variable $x_k$, we obtain
	\[
	{\rm res}_{x_k}(H_{ij},f) = {\rm res}_{x_k}((-1)^i a_i c_j,f) =  (-1)^i c_j {\rm res}_{x_k}(a_i,f) = (-1)^k c_j a_k G_{ik}=H_{kj} G_{ik}.
	\] 
	Since $i,k\in X$, then $  G_{ik}= P_{ik} =H_{ik}$. Therefore ${\rm res}_{x_k}(H_{ij},f) =H_{kj} H_{ik}$. We apply Theorem~\ref{thm:AV} and obtain the existence of a matrix $B$ with $\det(\{x_1,\hdots,x_n\}+B) = f$. 
\end{proof}

\begin{proof}[Proof of Theorem~\ref{thm:Main} ($\Leftarrow$)] First suppose that $A$ is reducible. Then, after permuting some rows and the corresponding columns, $A$ can be written as $\tiny{\begin{pmatrix}
			E & F \\ 0 & G
	\end{pmatrix}}$, with $E$ is a $k\times k$ submatrix. Let $H$ be any $k \times n-k$ matrix such that $H_{ij} = 0$ if and only if $F_{ij} \neq 0$ and $H_{ij} = 1$ otherwise. Let $B = \tiny{\begin{pmatrix}
			E & H \\ 0 & G
	\end{pmatrix}}$. If $S \subseteq [k]$, then $A_S = E_S = B_S$, and similarly if $S\subseteq [k]^c$. Otherwise, if $S= S_1 \cup S_2$ with $S_1 \subset [k]$ and $S_2 \subset [k]^c$, then $A_S = E_{S_1} G_{S_2} = B_S$. Therefore, $A$ and $B$ have the same principal minors. Notice that $B$ is not diagonally similar to $A$. Therefore, $A \notin \mathscr{D}(n)$.
	
	Now assume that $A$ is irreducible, $A$ is not diagonally equivalent to a symmetric matrix, and $A$ has a cut $X=[k]\subset [n]$. Notice that ${\rm rk}(A[X,X^c])$ and ${\rm rk}(A[X^c,X])$ are nonzero since $A$ is irreducible. Thus ${\rm rk}\left(A[X,X^c]\right)={\rm rk}\left(A[X^c,X]\right)= 1$. Let $M = {\rm diag}\{x_1,\hdots,x_n\} + A$, $f = \det\left(M\right)$,   and $G = M^{\rm adj}$. Using Lemma~\ref{lem:CutFactor}, for all $i \in X$ and $j \in X^c$ the entry $G_{ij} = (-1)^i a_i b_j$ for some $a_i \in \F[x_k: k\in X_i]$ and $b_j \in \F[x_k:k \in (X^c)_j]$. By Lemma~\ref{lem:NonzeroAdj}, $G_{ij} \neq 0$. therefore, neither $a_i$ nor $b_j$ can be zero and so the matrix $G$ can be written as a block matrix
	\begin{equation*}
		G = \begin{pmatrix} P & R \\ T & Q \end{pmatrix}
	\end{equation*} 
	where $P$ is a $k\times k$ submatrix, $R_{ij} = (-1)^i a_ib_j$ and $T_{ij} = (-1)^j c_i d_j$ with $a_i \in \F[x_k: k \in X_i]$, $b_j \in \F[x_k:k\in (X^c)_j]$, $c_i \in \F[x_k:x_k \in (X^c)_i]$, and $d_j \in \F[x_k:k \in X_j]$.  Consider the matrix $H$ defined by
	\begin{equation*}
		H =  \begin{pmatrix} P & U \\ V & Q^T \end{pmatrix}
	\end{equation*}
	with $U_{ij} = (-1)^ia_i c_j$ and $V_{ij} = (-1)^j b_i d_j$. We infer that $H=(\diag\{x_1,\hdots,x_n\}+B)^{\rm adj}$ where $B$ has the same principal minors as $A$, by Lemma~\ref{lem:CutFactor}.\\
	We still need to check that $B$ is not diagonally similar to $A$. We present a proof by contradiction. Indeed, assume there is a diagonal matrix $D$ such that $B=DAD^{-1}$. Then $H = DGD^{-1}$. Let $D = \diag\{\lambda_1,\hdots,\lambda_n\}$ with $\lambda_i \in \F$. Then $H_{ij} = \lambda_{j}/\lambda_i G_{ij}$ for all $i,j \in[n]$. In particular, $H_{ij} =(-1)^ia_i c_j = \lambda_j/\lambda_i (-1)^i a_i b_j$ for all $i \in X$ and $j \in X^c$. Hence, $b_j = \alpha_j c_j$ for some $\alpha_j \in \F$ and for $j \in X^c$. If this is the case, then we switch $a_i$ and $d_i$ in $G$ instead of $b_j$ and $c_j$. That is we let $H=\tiny\begin{pmatrix}
		P^T & U \\ V & Q
	\end{pmatrix} $ such that $U_{ij} = (-1)^i d_i b_j$ and $V_{ij} = (-1)^i a_i c_j$. Using the same argument as the one above, we obtain a matrix $C$ with the same principal minors as those of $A$. If $C$ is also diagonally congruent to $A$, then we get $a_i = \beta_i d_i$ for some $\beta_i \in \F$ and for all $i \in X$. In this case, by Lemma~\ref{lem:SymEqu}, $A$ is diagonally equivalent to a symmetric matrix and this gives the desired contradiction.
\end{proof}

\begin{example}
	Consider the $4\times 4$ matrix $A$ 
	\[
	A = \tiny{\begin{pmatrix}
			2 & -1 & 1 & -2 \\
			1 & 1 & -3 & 6 \\
			1 & 2 & 1 & 1 \\
			-1 & -2 & 2 & -1 \\
			
	\end{pmatrix}}.
	\]
	Notice that $A$ has a cut at $X=\{1,2\}$ since ${\rm rk}(A[X,X^c]) = {\rm rk}(A[X^c,X]) = 1$. We compute $G = (A+\diag\{x_1,\hdots,x_4\})^{\rm adj}$ and obtain
	\[
	G = \tiny{\begin{pmatrix}
			p_1 & ({x_3}+3) ({x_4}+3) & -({x_2}-2) ({x_4}+3) & ({x_2}-2) (2 {x_3}+3) \\
			-15 - 5 x_3 - 4 x_4 - x_3 x_4 & p_2 & (3 {x_1}+7) ({x_4}+3) & -(3 {x_1}+7) (2 {x_3}+3) \\
			-({x_2}-1) {x_4} & -(2 {x_1}+5) {x_4} & p_3 & -{x_1} {x_2}+11 {x_1}-4 {x_2}+29 \\
			({x_2}-1) ({x_3}+3) & (2 {x_1}+5) ({x_3}+3) & -(2 {x_1}+5) ({x_2}-2) & p_4 \\
	\end{pmatrix}}
	\]
	with $p_i = G_{ii}$. If we switch the factors $(x_2-2)$ and $-(3x_1+7)$ in $G[X,X^c]$ with the factors $(x_2-1)$ and $(2x_1+5)$ in $G[X^c,X]$ respectively and we transpose the submatrix $A[X,X]$,  we get
	\[
	H = \tiny{\begin{pmatrix}
			p_1 & -{x_3}{x_4}-5 {x_3}-4 {x_4}-15 & -({x_2}-1) ({x_4}+3) & ({x_2}-1) (2 {x_3}+3) \\
			({x_3}+3) ({x_4}+3) & p_2 & -(2 {x_1}+5) ({x_4}+3) & (2 {x_1}+5) (2 {x_3}+3) \\
			-({x_2}-2) {x_4} & (3 {x_1}+7) {x_4} & p_3 & -{x_1} {x_2}+11 {x_1}-4 {x_2}+29 \\
			({x_2}-2) ({x_3}+3) & -(3 {x_1}+7) ({x_3}+3) & -(2 {x_1}+5) ({x_2}-2) & p_4 \\
	\end{pmatrix}}
	\]
	which is the adjugate of the matrix $(B+\diag\{x_1,\hdots,x_n\})$ with 
	\[
	B = \tiny{\begin{pmatrix} 
			2 & 1 & 1 & -2 \\
			-1 & 1 & 2 & -4 \\
			1 & -3 & 1 & 1 \\
			-1 & 3 & 2 & -1
	\end{pmatrix}}.
	\]
	By computing the determinantal polynomial of $A$, $f_A = \det(\diag\{x_1,\hdots,x_n\}+A)$, and that of $B$, $f_B = \det(\diag\{x_1,\hdots,x_n\}+B)$, we see that $A$ and $B$ have the same principal minors. Simple computations show that $B$ is not diagonally equivalent to $A$.
\end{example}	
%Notice that the proof of Theorem~\ref{thm:Main} mainly depends on producing a new matrix using the cut. This suggests the following conjecture:

%\begin{conjecture}
%	Let $A$ be an $n\times n$ non-symmetric irreducible matrix with entries in a field $\F$. The number of preimages of the matrix $A$ under the principal minor map, up to diagonal equivalence, equals the number of cuts in the matrix $A$ plus one.
%\end{conjecture}

\section{The Structure of a matrix from the factorization of its determinantal polynomial}\label{sec:StructureThm}
In this section, we relate the irreducibility of a matrix $A$ to the irreducibility of its determinantal polynomial $f_A = \det(\diag\{x_1,\hdots,x_n\}+A)$. Moreover, we use the factors of this determinantal polynomial in order to study the structure of the matrix.
\begin{lem}\label{lem:IrrMatrixIrrPoly}
	Let $A$ be an $n\times n$ matrix with entries in a field $\F$. Let $f$ be its determinantal polynomial
	\begin{equation*}
		f = \det\left({\rm diag}\{x_1,x_2,\hdots,x_n\} + A\right).
	\end{equation*}
	Then $A$ is irreducible if and only if $f$ is irreducible.
\end{lem}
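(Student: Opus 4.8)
The plan is to prove both implications by relating block-triangular structure of $A$ to a factorization of $f = \det(\diag\{x_1,\dots,x_n\}+A)$. The easy direction is ($\Leftarrow$ contrapositive): if $A$ is reducible, then after simultaneously permuting rows and columns (which only permutes the variables $x_i$ and hence does not affect irreducibility of $f$ as a polynomial) we may assume $A = \tiny\begin{pmatrix} E & F \\ 0 & G\end{pmatrix}$ with $E$ of size $k\times k$, $1\le k\le n-1$. Then $\diag\{x_1,\dots,x_n\}+A$ is itself block upper triangular, so $f = \det(\diag\{x_1,\dots,x_k\}+E)\cdot \det(\diag\{x_{k+1},\dots,x_n\}+G)$, a product of two nonconstant multiaffine polynomials in disjoint variable sets; hence $f$ is reducible.

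For the harder direction ($\Rightarrow$ contrapositive), suppose $f$ is reducible, say $f = g\cdot h$ with $g,h$ nonconstant. Since $f$ is multiaffine, each variable $x_i$ appears in exactly one of $g,h$; this partitions $[n]$ into two nonempty sets $X$ (the variables in $g$) and $X^c$ (the variables in $h$). I would then show that for any $i\in X$ and $j\in X^c$, the Rayleigh difference $\Delta_{ij}(f)$ vanishes: writing $f = g\cdot h$ with $g$ independent of $x_j$ and $h$ independent of $x_i$, a direct computation gives $\Delta_{ij}(f) = \tfrac{\partial g}{\partial x_i}h\cdot g\tfrac{\partial h}{\partial x_j} - gh\cdot\tfrac{\partial g}{\partial x_i}\tfrac{\partial h}{\partial x_j} = 0$. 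By Theorem~\ref{thm:Dodgson}, $\Delta_{ij}(f) = G_{ij}G_{ji}$ where $G = (\diag\{x_1,\dots,x_n\}+A)^{\rm adj}$, so $G_{ij}G_{ji} = 0$, and since $\F[x_1,\dots,x_n]$ is a domain, $G_{ij}=0$ or $G_{ji}=0$ for every such pair. The remaining task is to upgrade this to: $A$ is reducible. Here I would invoke the contrapositive of Lemma~\ref{lem:NonzeroAdj}: if $A$ were irreducible, all entries of $G$ would be nonzero, contradicting $G_{ij}G_{ji}=0$. (Strictly, Lemma~\ref{lem:NonzeroAdj} is stated as an implication irreducible $\Rightarrow$ entries nonzero, so this contrapositive is immediate.) Hence $A$ is reducible.

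The main obstacle is that reducibility of $A$ does not by itself identify $X$ and $X^c$ as the block structure — a priori $A$ could be reducible along a different partition than the one coming from the factorization $f=gh$. To be careful, once we know $A$ is reducible we are already done proving the contrapositive (\,$f$ reducible $\Rightarrow A$ reducible\,), so the lemma as stated follows. If one wanted the sharper statement that the variable-partition of the factorization matches the block decomposition (which is what Theorem~\ref{thm:FiberSym} and the structure theorem of Section~\ref{sec:StructureThm} will need), one would argue inductively: the permutation realizing reducibility, combined with the disjointness of variable sets in the factors of $f$, forces the off-diagonal zero block to lie between $X$ and $X^c$; then restrict to the diagonal blocks and recurse on the corresponding factors of $f$. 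For the present lemma, the three ingredients — multiaffineness forcing a variable partition, Theorem~\ref{thm:Dodgson} turning the factorization into vanishing adjugate entries, and Lemma~\ref{lem:NonzeroAdj} detecting irreducibility via nonvanishing of the adjugate — suffice, and I expect the write-up to be short.
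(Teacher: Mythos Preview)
Your proposal is correct and follows essentially the same approach as the paper. The paper argues the nontrivial direction directly rather than by contrapositive: from $A$ irreducible it invokes Lemma~\ref{lem:NonzeroAdj} to get all $G_{ij}\neq 0$, then Theorem~\ref{thm:Dodgson} to get all $\Delta_{ij}(f)\neq 0$, and finally Lemma~\ref{lem:ZeroDelta} to conclude $f$ is irreducible; your contrapositive version unpacks the content of Lemma~\ref{lem:ZeroDelta} inline via the variable-partition argument, but the three ingredients are identical.
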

\begin{proof}
	$(\Longleftarrow)$ Suppose $A$ is reducible. Then $A$ can be written as a block upper triangular matrix, possibly after permuting some rows and the corresponding columns. Consequently $f$ factors into at least two factors.\\
	$(\Longrightarrow)$ Suppose $A$ is irreducible. Let $M = (A+\diag\{x_1,\hdots,x_n\})$ and $G=M^{\rm adj}$. Then using Lemma~\ref{lem:NonzeroAdj}, we have $G_{ij} \neq 0$ for all $i, j \in [n]$. Thus, Theorem~\ref{thm:Dodgson} implies that $\Delta_{ij}(f) = G_{ij} G_{ji} \neq 0$. In view of Lemma~\ref{lem:ZeroDelta}, we conclude that $f$ is irreducible.
\end{proof}

\begin{thm}\label{thm:StructureThm}
	Let $A$ be an $n\times n$ matrix with entries in a field $\F$. Let $f$ be the polynomial defined by
	\begin{equation*}
		f = \det\left({\rm diag}\{x_1,x_2,\hdots,x_n\} + A\right).
	\end{equation*}
	Then $f$ factors as $f_1\cdots f_s$ such that each $f_i$ is irreducible and $f_i \in \F[x_k:k\in T_i]$ if and only if $A$ can be written, after permuting some rows and the corresponding columns, as a block upper triangular matrix with $s$ diagonal blocks $A_1,\hdots,A_s$ such that each $A_i$ is irreducible and $f_i = \det(A_i+\diag\{x_k:k\in T_i\})$.
	
\end{thm}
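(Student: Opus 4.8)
\emph{The approach.} The implication ``$A$ block upper triangular $\Longrightarrow$ $f$ factors'' is the easy one, and I would dispatch it first: conjugating $\diag\{x_1,\dots,x_n\}+A$ by the permutation matrix that exhibits the block structure preserves the determinant and produces a block upper triangular matrix whose $i$th diagonal block is $A_i+\diag\{x_k:k\in T_i\}$ (here $T_i$ is the index set of $A_i$), so $f=\prod_{i=1}^{s}\det(A_i+\diag\{x_k:k\in T_i\})$; each factor is irreducible by Lemma~\ref{lem:IrrMatrixIrrPoly} and lies in $\F[x_k:k\in T_i]$, which is all that is needed. For the converse the plan is to build a \emph{second} factorization of $f$ out of the combinatorics of $A$ and match it against $f_1\cdots f_s$ using unique factorization in $\F[x_1,\dots,x_n]$.

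First I would normalize the data. Since $f$ is multiaffine with the coefficient of $x_1\cdots x_n$ equal to $1$, every variable $x_m$ occurs in $f$, hence in exactly one $f_i$ (two would force $\deg_{x_m}f\ge 2$); thus each $f_i$ is multiaffine and the supports $\mathrm{supp}(f_i)\subseteq T_i$ are pairwise disjoint and cover $[n]$. Rescaling each $f_i$ by the inverse of its leading coefficient (the coefficient of $\prod_{k\in\mathrm{supp}(f_i)}x_k$; the product of these leading coefficients equals $1$, so $\prod_i f_i=f$ is unaffected) and shrinking $T_i$ to $\mathrm{supp}(f_i)$, I may assume each $f_i$ has leading coefficient $1$ and that $T_1,\dots,T_s$ is a partition of $[n]$.

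Next I would invoke the standard reduction of a matrix to block triangular form. Let $C_1,\dots,C_r$ be the strongly connected components of the digraph of $A$ (an edge $u\to v$ whenever the $(u,v)$-entry of $A$ is nonzero), listed in a topological order of the condensation DAG, and permute rows of $A$ and the corresponding columns so the indices are grouped as $C_1,\dots,C_r$ in this order. This makes $A$ block upper triangular with diagonal blocks $A[C_i,C_i]$, each irreducible because the subgraph induced on a strongly connected component is strongly connected. Expanding a block triangular determinant gives $f=\prod_{i=1}^{r}g_i$ with $g_i:=\det(A[C_i,C_i]+\diag\{x_k:k\in C_i\})$; by Lemma~\ref{lem:IrrMatrixIrrPoly} each $g_i$ is irreducible, has leading coefficient $1$, and has support exactly $C_i$ (the monomial $\prod_{k\in C_i}x_k$ occurs with coefficient $1$).

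Finally I would compare the two factorizations: $g_1\cdots g_r=f=f_1\cdots f_s$ are both factorizations into irreducibles in a UFD whose units are the nonzero scalars, so $r=s$ and, after reindexing, $f_i$ and $g_i$ are associates; matching leading coefficients forces $f_i=g_i$ and matching supports forces $T_i=\mathrm{supp}(f_i)=C_i$, so $A_i:=A[C_i,C_i]=A[T_i,T_i]$ does the job. I do not expect a genuine obstacle: the one nontrivial input, Lemma~\ref{lem:IrrMatrixIrrPoly}, is already proved, and the rest is graph-theoretic folklore plus bookkeeping. The points that most need care are the normalization in the second step (without pinning each $T_i$ to the true support and each leading coefficient to $1$, one obtains only associateness rather than the asserted equality $f_i=\det(A_i+\diag\{x_k:k\in T_i\})$) and the harmless caveat that the diagonal blocks emerge ordered by the chosen topological order, so the indexing of the $A_i$ agrees with that of the $f_i$ only up to this relabeling.
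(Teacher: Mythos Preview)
Your argument is correct. Both directions are handled soundly: the easy implication is dispatched as in the paper, and for the converse your normalization step (forcing the $T_i$ to be the supports and the leading coefficients to be $1$) is exactly what is needed to upgrade ``associate'' to ``equal'' in the UFD comparison, so no genuine gap remains.

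The route, however, is different from the paper's. The paper proves the converse by induction on $s$: for $s>1$, Lemma~\ref{lem:IrrMatrixIrrPoly} forces $A$ to be reducible, so one splits $A$ into two blocks $A_1,A_2$, observes that the determinantal polynomial of each block is a subproduct of the $f_i$, and recurses. You instead go straight to the full decomposition of $A$ into strongly connected components, obtain a second irreducible factorization $f=\prod g_i$ from the resulting block upper triangular form, and then match it against $f_1\cdots f_s$ by unique factorization. In effect, you front-load the graph theory (topological ordering of the condensation) and finish with a single UFD comparison, whereas the paper keeps the argument purely algebraic and recursive, deriving the block structure one split at a time. Your approach yields the extra identification $T_i=C_i$ (the blocks are exactly the strongly connected components) in one stroke; the paper records this same observation separately, in the remark immediately following the theorem, citing Brualdi--Ryser for the combinatorial statement. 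Conversely, the paper's induction avoids invoking the SCC/topological-order machinery as a black box.
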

\begin{proof}
	$\Longleftarrow$ Suppose that $A$ can be written as a block upper triangular matrix with $s$ irreducible diagonal blocks. Then $f=\prod_{i=1}^s \det(A_i+\diag\{x_k:k\in T_i\})= \prod_{i=1}^sf_i$. Each $f_i$ is irreducible in view of Lemma~\ref{lem:IrrMatrixIrrPoly}.\\
	$\Longrightarrow$ Suppose that $f$ is reducible and factors as $f_1\cdots f_s$. We proceed by induction on $s$. The case $s=1$ follows from Lemma~\ref{lem:IrrMatrixIrrPoly}. For $s>1$, Lemma~\ref{lem:IrrMatrixIrrPoly} implies that $A$ is reducible and so it can be written as a block upper triangular matrix with two diagonal blocks $A_1$ and $A_2$. Without loss of generality, $\det(A_1) = f_1\cdots f_t$ for $t<s$. Thus, $A_1$ satisfies all the inductive hypothesis and it can be written as a block upper triangular matrix with $t$ diagonal irreducible blocks, after permuting some rows and the corresponding columns of $A_1$. This permutation will not affect $A_2$. The same holds for $A_2$ and we write it as a block upper triangular matrix with $s-t+1$ diagonal blocks. This proves that $A$ can be written as a block upper triangular matrix with $s$ diagonal blocks with associated determinantal polynomials that equal the factors of $f$.
\end{proof}
\begin{remark}
	Notice that Theorem~\ref{thm:StructureThm} implies that any matrix can be rearranged by permuting some rows and the corresponding columns to form a block upper triangular matrix with irreducible diagonal blocks. This fact was proved by Brualdi and Ryser \cite[Theorem~3.2.4]{brualdi_ryser_1991}. They gave a purely combinatorial proof. From a graph theory perspective, the theorem can be stated as follows: \\
	To each $n\times n$ matrix $A=(a_{ij})_{1\leq1 i,j\leq n}$, we associate a direct graph $G$ with $n$ vertices $v_1,\hdots,v_n$. There is a directed edge $(i,j)$ from a vertex $v_i$ to a vertex $v_j$ if and only if $a_{ij} \neq 0$. Suppose that $G$ is composed of strongly connected components $C_1,\hdots,C_s$. By contracting each component $C_i$ into a single vertex $V_i$, we obtain a new graph $H$ with $s$ vertices $V_1,...,V_s$. Directed edges are added to $H$ such that there is a directed edge from $V_i$ to $V_j$ if and only if there exists a directed edge from a vertex in $C_i$ to a vertex in $C_j$. Thus, $H$ is a directed acyclic graph, allowing for a topological ordering of its vertices. This topological ordering corresponds to the diagonal irreducible blocks.\\
	This gives a very nice connection between the strongly connected components of the graph $G$ that corresponds to a matrix $A$ and the factors of $f_A = \det(\diag\{x_1,\hdots,x_n\}+A)$.
\end{remark}
\begin{cor}\label{cor:GeneralFiber}
	Let $A$ be an $n\times n$ matrix with entries in the field $\F$. Let $f$ be the polynomial defined by
	\begin{equation*}
		f = \det\left({\rm diag}\{x_1,x_2,\hdots,x_n\} + A\right).
	\end{equation*}
	Suppose that $f$ factors as $f_1\cdots f_s$ such that each $f_i$ is irreducible and $f_i \in \F[x_k:k\in T_i]$. Then the fiber of $A$ consists of all matrices that can be written as block upper triangular matrices with $s$ diagonal irreducible blocks $A_1,\hdots,A_s$ with $\det(A_i+\diag\{x_k:k\in [T_i]\})=f_i$ for each $i$.
\end{cor}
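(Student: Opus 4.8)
The plan is to combine the structural description from Theorem~\ref{thm:StructureThm} with the fiber characterizations of irreducible matrices already obtained, and to check that the factorization data $f = f_1 \cdots f_s$ is an invariant of the fiber. First I would prove the easy inclusion: if $B$ can be written (after a permutation of rows and corresponding columns) as a block upper triangular matrix with irreducible diagonal blocks $A_1, \ldots, A_s$ satisfying $\det(A_i + \diag\{x_k : k \in T_i\}) = f_i$, then $\det(\diag\{x_1,\ldots,x_n\} + B) = \prod_i f_i = f$, so the principal minors of $B$ (which are the coefficients of $f_B = f$) agree with those of $A$; hence $B$ lies in the fiber of $A$. For this direction I only need that the principal minor map is, up to reindexing, the coefficient vector of the determinantal polynomial, which is noted in the Background section.

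For the reverse inclusion, let $B$ be any matrix in the fiber of $A$. Then $f_B = f_A = f$, since the principal minors are exactly the coefficients of the determinantal polynomial. Now apply Theorem~\ref{thm:StructureThm} to $B$: because $f_B = f$ factors as $f_1 \cdots f_s$ with each $f_i$ irreducible and $f_i \in \F[x_k : k \in T_i]$, the matrix $B$ can be written, after permuting some rows and the corresponding columns, as a block upper triangular matrix with $s$ diagonal blocks $B_1, \ldots, B_s$ that are irreducible and satisfy $\det(B_i + \diag\{x_k : k \in T_i\}) = f_i$. This is precisely the asserted form, with $B_i$ playing the role of $A_i$. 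The one point that needs care is that the index sets $T_i$ appearing for $B$ are the \emph{same} partition of $[n]$ as for $A$; this is forced because the support $T_i$ of the irreducible factor $f_i$ is intrinsic to the polynomial $f$ (each variable $x_k$ appears in exactly one irreducible factor, since $f$ is multiaffine and its factors are coprime), so Theorem~\ref{thm:StructureThm} applied to either $A$ or $B$ must recover the same sets $T_i$ up to relabeling the factors.

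I would then assemble these two inclusions into the statement, perhaps remarking that combined with Theorem~\ref{thm:Main} (or Loewy's theorem) one knows exactly when each diagonal block has a unique preimage up to diagonal equivalence, so the fiber is as small as possible precisely when every $A_i$ is irreducible with no cut or diagonally equivalent to a symmetric matrix. The main obstacle, such as it is, is purely bookkeeping: making sure the permutation that block-triangularizes $B$ interacts correctly with the fixed partition $\{T_i\}$ of the variable set, and that ``block upper triangular with diagonal blocks $A_1, \ldots, A_s$'' is interpreted up to simultaneous row-and-column permutation as in the rest of the paper. No new ideas beyond Theorem~\ref{thm:StructureThm} and the identification of principal minors with coefficients of $f_A$ should be required; the corollary is essentially a restatement of the structure theorem in the language of fibers.
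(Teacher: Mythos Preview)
Your proposal is correct and matches the paper's implicit approach: the corollary is stated without proof as an immediate consequence of Theorem~\ref{thm:StructureThm}, and your argument---observe that $f_B=f_A$ because principal minors are the coefficients of the determinantal polynomial, then apply Theorem~\ref{thm:StructureThm} to $B$---is exactly the intended one-line justification, with the easy inclusion handled by the block-triangular determinant computation. The extra care you take about the partition $\{T_i\}$ being intrinsic to $f$ is correct and worth making explicit, though the paper leaves it tacit.
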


\begin{example}
	Consider the $6\times 6$ matrix $A$
	\[
	A = \tiny{\left(
		\begin{array}{cccccc}
			1 & -3 & 3 & -2 & -1 & 2 \\
			0 & -3 & 5 & 1 & 0 & 2 \\
			0 & 0 & 4 & 0 & 0 & -4 \\
			0 & 1 & 2 & 1 & 0 & 5 \\
			1 & 0 & -1 & 6 & 2 & 4 \\
			0 & 0 & 2 & 0 & 0 & 3 \\
		\end{array}
		\right)}.
	\]
	Then the determinantal polynomial of $A$ is given by
	\[
	f_A = \det(\diag\{x_1,\hdots,x_n\}+A)=(x_1x_5+2 x_1+x_5+3) (x_2 x_4+x_2-3x_4-4) (x_3 x_6+3 x_3+4 x_6+20).
	\]
	According to Theorem~\ref{thm:StructureThm}, this matrix can be written as a block upper triangular matrix with three irreducible diagonal blocks. Since the fiber of a $2\times 2$ irreducible matrix is a point up to diagonal equivalence, then Corollary~\ref{cor:GeneralFiber} tells us that the fiber of $A$ consists of matrices that can be written in the following form up to permuting the diagonal blocks and up to diagonal equivalence:
	\[
	\tiny{
	\begin{pmatrix}
		\begin{matrix}
			2 & 1\\ 
			-1 & 1 \end{matrix} & \mbox{\normalfont\large $\star$ }& \mbox{\normalfont\large $\star$ }\\
		\mbox{\normalfont\large 0} & \begin{matrix} 1 & 1 \\ 1 & -3 \end{matrix} & \mbox{\normalfont\large $\star$} \\
		\mbox{\normalfont\large 0} & \mbox{\normalfont\large 0} & \begin{matrix} 3 & 2\\-4 & 4\end{matrix}
	\end{pmatrix}}
	\]
	where $\star$ can be replaced with any $2\times 2$ matrix.
\end{example}
\begin{cor}
	Let $A$ be an $n\times n$ irreducible matrix with entries in a field $\F$ with $n\geq 4$. The fiber of $A$ consists of irreducible matrices.
\end{cor}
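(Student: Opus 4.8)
The plan is to deduce this corollary directly from Theorem~\ref{thm:StructureThm} (or, equivalently, from Lemma~\ref{lem:IrrMatrixIrrPoly} together with Corollary~\ref{cor:GeneralFiber}), using the fact that having equal principal minors is the same as having equal determinantal polynomials. The key observation is that the determinantal polynomial $f_A = \det(\diag\{x_1,\hdots,x_n\}+A)$ is multiaffine and its coefficients are exactly the principal minors $A_S$; hence two matrices lie in the same fiber of the principal minor map if and only if they have the \emph{same} determinantal polynomial.

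So let $B$ be any matrix in the fiber of $A$, i.e. $\op(B) = \op(A)$. Then $f_B = f_A$ as polynomials in $\F[x_1,\hdots,x_n]$. Since $A$ is irreducible, Lemma~\ref{lem:IrrMatrixIrrPoly} gives that $f_A$ is an irreducible polynomial; therefore $f_B = f_A$ is irreducible as well. Applying Lemma~\ref{lem:IrrMatrixIrrPoly} in the other direction to the matrix $B$, we conclude that $B$ is irreducible. Since $B$ was an arbitrary element of the fiber, every matrix in the fiber of $A$ is irreducible, which is the claim. (The hypothesis $n\geq 4$ plays no essential role here beyond keeping us in the regime of the ambient theorem; the argument works verbatim for all $n$.)

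There is essentially no obstacle: the only thing to be careful about is the translation between the principal minor map and the determinantal polynomial, namely that the vector $(A_S)_{S\subseteq[n]}$ determines and is determined by $f_A$. This is immediate from the Laplace/cofactor expansion of $\det(\diag\{x_1,\hdots,x_n\}+A)$, which shows the coefficient of $\prod_{i\in S^c} x_i$ in $f_A$ equals $A_S$. Alternatively, one can simply cite Corollary~\ref{cor:GeneralFiber} with $s=1$: an irreducible $A$ has $f_A$ irreducible, so $s=1$, and the corollary says the fiber consists of matrices writable as a ``block upper triangular matrix with $1$ irreducible diagonal block'' — that is, irreducible matrices. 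I would present the short, self-contained version via Lemma~\ref{lem:IrrMatrixIrrPoly}.
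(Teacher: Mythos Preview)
Your proposal is correct and follows essentially the same argument as the paper: use Lemma~\ref{lem:IrrMatrixIrrPoly} to pass from irreducibility of $A$ to irreducibility of $f_A=f_B$, and then back to irreducibility of $B$. The paper's proof is exactly this short chain of implications.
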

\begin{proof}
	Let $f = \det(A + \diag\{x_1,\hdots,x_n\})$. By Lemma~\ref{lem:IrrMatrixIrrPoly}, $f$ is irreducible. Let $B$ be any matrix in the fiber of $A$, then $\det(B+\diag\{x_1,\hdots,x_n\})=f$. Hence this later determinant is irreducible. By Lemma~\ref{lem:IrrMatrixIrrPoly}, $B$ is irreducible.
\end{proof}

\section{The fibers of Symmetric Matrices}\label{sec:FiberSym}

\begin{lem}\label{lem:FiberSymIrr}
	Let $A$ be an $n\times n$ symmetric matrix with $n\geq 4$ and with entries in the field $\F$. Then $A$ belongs to the set $\mathscr{D}(n)$ if and only if $A$ is irreducible.
\end{lem}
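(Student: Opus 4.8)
The plan is to prove the two directions separately; the substantive one goes through the adjugate matrices. For the direction $A\in\mathscr{D}(n)\Rightarrow A$ irreducible, I would argue the contrapositive. Conjugating by a permutation matrix does not affect membership in $\mathscr{D}(n)$ (it only relabels principal minors and respects diagonal equivalence), so a reducible symmetric $A$ may be taken block upper triangular, and since it remains symmetric the strictly upper block must vanish, giving $A$ the block-diagonal form with diagonal blocks $E$ and $G$. Exactly as in the reducible case of the proof of Theorem~\ref{thm:Main}, replacing the zero upper-right block by any nonzero matrix produces a $B$ with the same principal minors as $A$; since diagonal conjugation preserves the zero pattern and $A^{T}=A$, such a $B$ is not diagonally equivalent to $A$, so $A\notin\mathscr{D}(n)$.

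For the converse, suppose $A$ is irreducible and symmetric. Since transposition and diagonal conjugation preserve principal minors and $A=A^{T}$, it suffices to show that every $B$ with the same principal minors as $A$ is diagonally equivalent to $A$. Write $f=f_{A}=f_{B}$ (the principal minors are precisely the coefficients of $f$); by Lemma~\ref{lem:IrrMatrixIrrPoly}, $f$ is irreducible, and applying the lemma again shows $B$ is irreducible. Put $G=(\diag\{x_1,\dots,x_n\}+A)^{\rm adj}$ and $H=(\diag\{x_1,\dots,x_n\}+B)^{\rm adj}$. By Lemma~\ref{lem:NonzeroAdj} every entry of $G$ and of $H$ is a nonzero polynomial, and $G$ is symmetric since $\diag\{x_1,\dots,x_n\}+A$ is. The goal is to produce an invertible diagonal matrix $D$ over $\F$ with $H=DGD^{-1}$: once this is known, combining it with $\det(\diag\{x\}+B)=f=\det(\diag\{x\}+A)$ and inverting over $\F(x_1,\dots,x_n)$ gives $\diag\{x\}+B=D(\diag\{x\}+A)D^{-1}$, hence $B=DAD^{-1}$.

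To build $D$: the diagonals already agree, since $H_{ii}=\partial f/\partial x_{i}=G_{ii}$ (the $(i,i)$ entry of an adjugate equals the derivative of the determinant in $x_i$). For $i\neq j$, Theorem~\ref{thm:Dodgson} together with the symmetry of $G$ gives $\Delta_{ij}(f)=G_{ij}G_{ji}=G_{ij}^{2}$, while also $\Delta_{ij}(f)=H_{ij}H_{ji}$. Entries of the adjugate of $\diag\{x_1,\dots,x_n\}+A$ are multiaffine, hence squarefree in the UFD $\F[x_1,\dots,x_n]$, so from $H_{ij}H_{ji}=G_{ij}^{2}$ every irreducible factor of the squarefree polynomial $G_{ij}$ must divide both $H_{ij}$ and $H_{ji}$; this forces $H_{ij}=r_{ij}G_{ij}$ with $r_{ij}\in\F^{\times}$ and $r_{ij}r_{ji}=1$. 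Finally, for pairwise distinct $i,j,k$, applying the resultant identity of Theorem~\ref{thm:AV} to the adjugate entries of both $A$ and $B$ (the constant $r_{ij}$ factors out of ${\rm res}_{x_k}(\cdot,f)$) gives $r_{ij}\,G_{ik}G_{kj}={\rm res}_{x_k}(H_{ij},f)=H_{ik}H_{kj}=r_{ik}r_{kj}\,G_{ik}G_{kj}$, and dividing by the nonzero $G_{ik}G_{kj}$ yields the cocycle relation $r_{ij}=r_{ik}r_{kj}$. With $r_{ij}r_{ji}=1$ this means $r_{ij}=D_{i}/D_{j}$ where $D_{1}=1$ and $D_{i}=r_{i1}$, so $H=DGD^{-1}$, completing the argument.

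The step I expect to be the crux is showing that each ratio $r_{ij}=H_{ij}/G_{ij}$ is a \emph{constant} rather than merely a rational function. This is exactly where symmetry is used: it makes $\Delta_{ij}(f)$ the perfect square $G_{ij}^{2}$, and together with squarefreeness of multiaffine polynomials this pins $H_{ij}$ down to a scalar multiple of $G_{ij}$. For a non-symmetric matrix with a cut one instead has $\Delta_{ij}(f)=G_{ij}G_{ji}$ with $G_{ij}\neq G_{ji}$ splitting into an $X$-part and an $X^{c}$-part whose factors can be recombined (as in Lemma~\ref{lem:Construction}), making $r_{ij}$ genuinely non-constant — which is precisely why such matrices fail to lie in $\mathscr{D}(n)$.
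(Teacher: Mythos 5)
Your proposal is correct and follows essentially the same route as the paper's proof: both directions match, with $\Delta_{ij}(f)=G_{ij}^2=H_{ij}H_{ji}$ giving constants $r_{ij}$, the resultant identity of Theorem~\ref{thm:AV} giving the cocycle relation, and the diagonal matrix $D$ built from the $r_{i1}$. Your squarefreeness argument for why $H_{ij}$ must be a \emph{constant} multiple of $G_{ij}$ is a welcome elaboration of a step the paper states tersely, and your zero-pattern argument in the reducible direction likewise fills in the paper's ``direct computations.''
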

\begin{proof}
	$\Longrightarrow$ If $A$ is reducible and symmetric, then $A$ can be written as a block diagonal matrix with diagonal blocks that are symmetric. Let $B$ be any block upper triangular matrix with the same diagonal blocks where we replace all the upper blocks of the matrix with nonzero entries. Direct computations show that $A$ and $B$ have equal principal minors, but $B$ is not diagonally equivalent to $A$. Therefore $A \notin \mathscr{D}(n)$. \\
	$\Longleftarrow$ Let $B $ be a matrix with the same principal minors as $A$. Then
	\[
	f_A = \det\left({\rm diag}\{x_1,x_2,\hdots,x_n\} + A\right)=\det\left({\rm diag}\{x_1,x_2,\hdots,x_n\} + B\right)=f_B.
	\] 
	Then $\Delta_{ij}(f_A) = \Delta_{ij}(f_B)$. Notice that $G = (\diag\{x_1,\hdots,x_n\}+A)^{\rm adj}$ is also symmetric. Using Theorem~\ref{thm:Dodgson}, $\Delta_{ij}(f_A)=G_{ij}^2$. Let $H= (\diag\{x_1,\hdots,x_n\}+B)^{\rm adj}$. Then $\Delta_{ij}(f)$ equals $G_{ij}^2 = H_{ij} H_{ji}$. Since $G_{ij}$ and $H_{ij}$ are multiaffine, there exists $\alpha_{ij}\in\F$ such that $H_{ij} = \alpha_{ij} G_{ij}$ and $H_{ji} = \frac{1}{\alpha_{ij}} G_{ij}$ for $1\leq i < j \leq n$. Since $A$ is irreducible, then so is $f$. By Lemma~\ref{lem:ZeroDelta}, $\Delta_{ij}(f) \neq 0$ and so $\alpha_{ij} \neq 0$. Using Theorem~\ref{thm:AV}, ${\rm res}_{x_k}(H_{ij},f) = H_{ik} H_{kj}$. Then $\alpha_{ij}= \alpha_{ik} \frac{1}{\alpha_{jk}}$ for $1 \leq i < j<k \leq n$. Let $D = {\rm diag}\{1,\alpha_{12},\hdots,\alpha_{1n}\}$, then $B = D A D^{-1}$ and $A \in \mathscr{D}(n)$.
\end{proof}

\begin{thm}\label{thm:FiberSym}
	Let $A$ be an $n\times n$ symmetric matrix with entries in a field $\F$ and with $n\geq 4$. Then either
	\begin{enumerate}
		\item[(a)] The matrix $A$ is irreducible and the fiber of $A$ consists of a point up to diagonal equivalence or
		\item[(b)] the matrix $A$ can be written as a completely reducible matrix with $s$ irreducible diagonal blocks in which case the fiber of $A$ consists of all matrices that can be written as block upper triangle matrices with $s$ irreducible diagonal blocks that are diagonally equivalent to the diagonal blocks of $A$, after permuting some rows and the corresponding columns.
	\end{enumerate}  
\end{thm}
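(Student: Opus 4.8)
The proof assembles three facts already available: Lemma~\ref{lem:FiberSymIrr}, which says an irreducible symmetric matrix of size at least $4$ lies in $\mathscr{D}$; Corollary~\ref{cor:GeneralFiber}, which describes an arbitrary fiber in terms of the irreducible factorization of the determinantal polynomial; and the observation that a symmetric matrix is either irreducible or completely reducible.

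Case (a) is immediate: if $A$ is irreducible then $A\in\mathscr{D}(n)$ by Lemma~\ref{lem:FiberSymIrr}, which is exactly the assertion that the fiber of $A$ is a single point up to diagonal equivalence.

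For case (b), the first step is to note that a reducible symmetric matrix is completely reducible: if a permutation of rows and the corresponding columns puts $A$ in block upper triangular form with a zero off-diagonal block $A[X^c,X]=0$, then symmetry forces $A[X,X^c]=0$ as well, so $A$ is block diagonal; iterating, $A$ equals, after permuting rows and the corresponding columns, $A_1\oplus\cdots\oplus A_s$ with each $A_i$ irreducible and symmetric. By Lemma~\ref{lem:IrrMatrixIrrPoly} each determinantal polynomial $f_{A_i}=\det(A_i+\diag\{x_k:k\in U_i\})$ is irreducible, where $U_1,\dots,U_s$ is the induced partition of $[n]$, and $f_A=\prod_i f_{A_i}$. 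Now apply Corollary~\ref{cor:GeneralFiber}: a matrix $B$ lies in the fiber of $A$ if and only if, after permuting rows and the corresponding columns, $B$ is block upper triangular with $s$ irreducible diagonal blocks $B_1,\dots,B_s$ whose determinantal polynomials are the irreducible factors $f_1,\dots,f_s$ of $f_A$. Since $\F[x_1,\dots,x_n]$ is a UFD, these factors coincide, up to order and scalar, with the $f_{A_i}$; comparing the coefficient of the top monomial, which is $1$ for the determinantal polynomial of any matrix, forces the scalars to be $1$, and since two equal polynomials involve the same variables we get, after reindexing, that $B_i$ and $A_i$ are indexed by the same set and have the same determinantal polynomial, hence the same principal minors, i.e.\ $B_i$ lies in the fiber of $A_i$. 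Each $A_i$ is an irreducible symmetric matrix: if its size is at least $4$, its fiber is a point up to diagonal equivalence by Lemma~\ref{lem:FiberSymIrr}; if its size is at most $3$, it has no cut, there being no $X$ with $2\le|X|\le|U_i|-2$, so its fiber is a point up to diagonal equivalence by \cite[Theorem~1]{Loewy}. Hence each $B_i$ is diagonally equivalent to $A_i$. Conversely, if $B$ becomes, after permuting rows and the corresponding columns, a block upper triangular matrix whose irreducible diagonal blocks are diagonally equivalent to the diagonal blocks of $A$, then for each $S\subseteq[n]$ the block upper triangular structure gives $\det(B[S,S])=\prod_i\det(B_i[S\cap T_i,S\cap T_i])$, and since each $B_i$ is diagonally equivalent to a block of $A$ these factors equal the corresponding principal minors of the $A_i$'s; multiplying out recovers $\det(A[S,S])$, so $B$ lies in the fiber of $A$.

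The main obstacle is not a single hard estimate but the bookkeeping in case (b): correctly matching each irreducible factor of $f_A$ to a diagonal block of $A$ through unique factorization, and tracking the row and column permutations so that \emph{diagonally equivalent} is understood with respect to matching index sets. A secondary point is that Lemma~\ref{lem:FiberSymIrr} is stated only for size at least $4$, so the irreducible blocks of size $1$, $2$, and $3$ must be dispatched separately via the absence of cuts and Loewy's theorem.
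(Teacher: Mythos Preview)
Your proof is correct and follows essentially the same route as the paper: both dispatch (a) via Lemma~\ref{lem:FiberSymIrr}, and for (b) both factor $f_A$ and invoke the structure theorem (you cite Corollary~\ref{cor:GeneralFiber}, the paper cites the equivalent Theorem~\ref{thm:StructureThm}) to reduce to matching each irreducible block $B_i$ with the symmetric irreducible block $A_i$, then appeal to the single-point fiber result for irreducible symmetric matrices. Your write-up is actually more careful than the paper's in two places: you separately handle diagonal blocks of size $\leq 3$ (where Lemma~\ref{lem:FiberSymIrr}, stated for $n\geq 4$, does not literally apply) by noting the absence of cuts and invoking \cite[Theorem~1]{Loewy}, and you spell out the converse inclusion of the fiber, both of which the paper leaves implicit.
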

\begin{proof}
	Part $(a)$ of the theorem is Lemma~\ref{lem:FiberSymIrr}. For $(b)$, assume that $A$ is completely reducible with $s$ irreducible diagonal blocks $A_1,\hdots,A_s$ indexed on the rows and columns by $T_1,\hdots,T_s$ respectively with $T_i \subset [n]$ and let $B$ be a matrix in the fiber of $A$ under the principal minor map. Using Theorem~\ref{thm:StructureThm}, $f_A = \det(\diag{x_1,\hdots,x_n}+A)$ is reducible and can be written as $f_1\cdots f_s$ where each $f_i = \det(A_i + \diag\{x_k:k \in T_i\})$ is irreducible. Since $A$ and $B$ have equal determinantal polynomials $f_A = f_B = \det(\diag{x_1,\hdots,x_n}+B)$, then , in view of Theorem\ref{thm:StructureThm}, $B$ can be written as a block upper triangular matrix with $s$ irreducible diagonal blocks $B_i$ with $1\leq i \leq s$ and such that $f_i = \det(B_i + \diag\{x_k:k \in T_i\})$. Thus $B_i$ and $A_i$ have the same principal minors for each $i$. Since $B_i$ is symmetric and irreducible, then by Lemma~\ref{lem:FiberSymIrr} $A_i$ is diagonally equivalent to $B_i$ as desired.
\end{proof}

\section{The fibers of Hermitian matrices and Determinantal Stable Polynomials}\label{sec:FiberHer}
In this section we restrict to the field of complex numbers $\C$.

\begin{lem}\label{lem:FiberHerIrr}
	Let $A$ be an $n\times n$ complex matrix with $n\geq 4$. The fiber of $A$ consists of irreducible Hermitian matrices up to diagonal equivalence if and only if $A$ is diagonally equivalent to an irreducible Hermitian matrix.
\end{lem}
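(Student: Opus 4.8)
The ($\Rightarrow$) direction is immediate, since $A$ lies in its own fiber: if the fiber consists of irreducible Hermitian matrices up to diagonal equivalence, then in particular $A$ is diagonally equivalent to an irreducible Hermitian matrix. For ($\Leftarrow$), note that diagonal equivalence preserves principal minors and hence fibers, so we may assume $A$ itself is Hermitian and irreducible; it then suffices to show that an arbitrary matrix $B$ in the fiber of $A$ is diagonally equivalent to an irreducible Hermitian matrix. As $B$ and $A$ have the same principal minors, $f_B=f_A$; since $A$ is irreducible, $f_A$ is irreducible by Lemma~\ref{lem:IrrMatrixIrrPoly}, hence so is $f_B$, hence $B$ is irreducible by the same lemma. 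So the task is to produce an invertible diagonal $D$ with $DBD^{-1}$ Hermitian.

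Write $f=f_A=f_B$. Since $A$ is Hermitian, $\overline{A}=A^{T}$, so $\overline{f}=\det(\diag\{x_1,\hdots,x_n\}+A^{T})=\det\big((\diag\{x_1,\hdots,x_n\}+A)^{T}\big)=f$; thus $f$ has real coefficients. Put $G=(\diag\{x_1,\hdots,x_n\}+A)^{\rm adj}$ and $H=(\diag\{x_1,\hdots,x_n\}+B)^{\rm adj}$. By Lemma~\ref{lem:NonzeroAdj} every entry of $G$ and of $H$ is a nonzero multiaffine polynomial, and the Hermitian symmetry of $A$ gives $G_{ji}=\overline{G_{ij}}$, where the bar denotes conjugation of coefficients. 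By Theorem~\ref{thm:Dodgson}, for all $i\neq j$ we have $\Delta_{ij}(f)=G_{ij}\overline{G_{ij}}=H_{ij}H_{ji}$, and on $\R^n$ this common value equals $|G_{ij}|^2\ge 0$.

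The crux is a factorization argument in the UFD $\C[x_k : k\neq i,j]$. A multiaffine polynomial is squarefree and its irreducible factors have pairwise disjoint variable supports; writing $G_{ij}=\pi_1\cdots\pi_r$ accordingly, the identity $G_{ij}\overline{G_{ij}}=\prod_{\ell}\pi_\ell\overline{\pi_\ell}$ displays $\Delta_{ij}(f)$ as a product of blocks with disjoint supports, and within each block the only factorization of $\pi_\ell\overline{\pi_\ell}$ into two multiaffine polynomials is $\pi_\ell\cdot\overline{\pi_\ell}$ up to scalars. Hence $H_{ij}=c_{ij}\prod_{\ell}\sigma_\ell$ and $H_{ji}=c_{ji}\prod_{\ell}\overline{\sigma_\ell}$ with each $\sigma_\ell\in\{\pi_\ell,\overline{\pi_\ell}\}$ and $c_{ij},c_{ji}\in\C^{\times}$, so that $H_{ij}=\rho_{ij}\,\overline{H_{ji}}$ for the \emph{constant} $\rho_{ij}=c_{ij}/\overline{c_{ji}}$. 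Evaluating $\Delta_{ij}(f)=\rho_{ij}\,\overline{H_{ji}}\,H_{ji}$ at a generic point $x\in\R^n$ gives $\rho_{ij}|H_{ji}(x)|^2=|G_{ij}(x)|^2\ge 0$ with $|H_{ji}(x)|^2>0$, forcing $\rho_{ij}\in\R_{>0}$. This step is the main obstacle: a priori $\Delta_{ij}(f)$ could split as $H_{ij}H_{ji}$ in ways not proportional to the conjugate-symmetric splitting coming from $A$, and it is precisely the multiaffinity of the adjugate entries that excludes this.

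It remains to glue the scalars $\rho_{ij}$ into a single diagonal conjugation. Conjugating the resultant relation ${\rm res}_{x_k}(H_{ij},f)=H_{ik}H_{kj}$ of Theorem~\ref{thm:AV} and using $\overline{f}=f$ gives ${\rm res}_{x_k}(\overline{H_{ij}},f)=\overline{H_{ik}}\,\overline{H_{kj}}$; substituting $\overline{H_{pq}}=\rho_{pq}H_{qp}$ everywhere (valid since each $\rho_{pq}$ is real), applying Theorem~\ref{thm:AV} to ${\rm res}_{x_k}(H_{ji},f)$, and cancelling the nonzero factors yields the cocycle identity $\rho_{ij}=\rho_{ik}/\rho_{jk}$ for distinct $i,j,k$ (and in particular $\rho_{ij}\rho_{ji}=1$). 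Set $D=\diag\{\delta_1,\hdots,\delta_n\}$ with $\delta_i=\sqrt{\rho_{1i}}$ and $\rho_{11}:=1$; the cocycle identity gives $\delta_i^2/\delta_j^2=\rho_{ij}^{-1}$, whence $(DHD^{-1})_{ij}=(\delta_i/\delta_j)\rho_{ij}\overline{H_{ji}}=\overline{(DHD^{-1})_{ji}}$ for $i\neq j$, while $(DHD^{-1})_{ii}=H_{ii}=\partial f/\partial x_i$ has real coefficients; so $DHD^{-1}$ is Hermitian. Since $\det(\diag\{x_1,\hdots,x_n\}+B)=f$ has real coefficients, it follows that $\diag\{x_1,\hdots,x_n\}+DBD^{-1}=D(\diag\{x_1,\hdots,x_n\}+B)D^{-1}$ is Hermitian, so $DBD^{-1}$ is an irreducible Hermitian matrix diagonally equivalent to $B$, completing the proof.
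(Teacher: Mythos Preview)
Your proof is correct and takes a genuinely different route from the paper's. The paper dispatches the $(\Leftarrow)$ direction in two lines by citing external results: since $A$ is Hermitian, $f_A$ is real stable by \cite[Theorem~3.2]{BBL09}, and since $f_A$ is moreover irreducible (Lemma~\ref{lem:IrrMatrixIrrPoly}), any $B$ with $f_B=f_A$ is diagonally equivalent to a Hermitian matrix by \cite[Theorem~6.4]{AV22}. You instead give a self-contained argument using only tools developed in this paper (Theorem~\ref{thm:AV}, Theorem~\ref{thm:Dodgson}, Lemma~\ref{lem:NonzeroAdj}, Lemma~\ref{lem:IrrMatrixIrrPoly}), avoiding real stability entirely. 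Your key observation --- that the irreducible factors of a multiaffine polynomial have pairwise disjoint variable supports --- forces any multiaffine splitting of $\Delta_{ij}(f)=G_{ij}\overline{G_{ij}}$ to pick exactly one factor from each conjugate pair, yielding $H_{ij}=\rho_{ij}\overline{H_{ji}}$ with $\rho_{ij}\in\R_{>0}$; the resultant relation of Theorem~\ref{thm:AV} then promotes the $\rho_{ij}$ to a multiplicative $1$-cocycle, hence a coboundary. This is in effect a direct proof of the relevant instance of \cite[Theorem~6.4]{AV22}. What your approach buys is independence from the analytic theory of stable polynomials; what the paper's buys is brevity and a transparent link to that literature. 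One small expository point: your final implication ``$DHD^{-1}$ Hermitian and $f$ real $\Rightarrow$ $\diag(x)+DBD^{-1}$ Hermitian'' deserves one more word --- writing $M'=\diag(x)+DBD^{-1}$, over the field $\C(x_1,\hdots,x_n)$ one has $M'=f\cdot\big((M')^{\rm adj}\big)^{-1}$, and the inverse of a Hermitian matrix scaled by a real function is Hermitian.
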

\begin{proof}
	$\Longleftarrow$ Suppose that $A$ is Hermitian. Let $f=\det(\diag\{x_1,\hdots,x_n\}+A)$. In view of \cite[Theorem 3.2]{BBL09}, $f$ is real stable. Moreover, by Lemma~\ref{lem:IrrMatrixIrrPoly}, $f$ is irreducible since $A$ is irreducible. Let $B$ be a matrix in the fiber of $A$. Then $\det(B+\diag\{x_1,\hdots,x_n\}) = f$. By \cite[Theorem $6.4$]{AV22}, $B$ is diagonally equivalent to a Hermitian irreducible matrix.\\
	$\Longrightarrow$ Let $B$ be a matrix in the fiber of $A$. Then $B$ is diagonally equivalent to an irreducible Hermitian matrix. By the above argument, $A$ is diagonally equivalent to an irreducible Hermitian matrix.
\end{proof}
\begin{thm}\label{thm:FiberHer}
	Let $A$ be an $n\times n$ Hermitian matrix with $n\geq 4$. Then either 
	\begin{enumerate}
		\item[(a)] The matrix $A$ is irreducible and the fiber of $A$ consists of Hermitian irreducible matrices up to diagonal equivalence or
		\item[(b)] The matrix  $A$ can be written as a completely reducible matrix with $s$ irreducible diagonal blocks, after permuting some rows and the corresponding columns, in which case the fiber of $A$ consists exactly of matrices that can be written as block upper triangular matrices with $s$ diagonal  blocks that are irreducible and diagonally equivalent to Hermitian irreducible matrices and that have determinantal polynomials equal to the factors of $f$.
	\end{enumerate}
\end{thm}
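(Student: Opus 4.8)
The plan is to dispatch part (a) immediately from Lemma~\ref{lem:FiberHerIrr} and then reduce part (b) to the irreducible case via the structure theorem. For (a): if $A$ is irreducible Hermitian, then in particular $A$ is diagonally equivalent to an irreducible Hermitian matrix (namely itself), so Lemma~\ref{lem:FiberHerIrr} says exactly that the fiber of $A$ consists of irreducible Hermitian matrices up to diagonal equivalence.

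For (b), the first point I would establish is that a reducible Hermitian matrix is automatically \emph{completely} reducible. Indeed, for a Hermitian matrix one has $A_{ij}\neq 0$ if and only if $A_{ji}=\overline{A_{ij}}\neq 0$, so the directed graph associated with $A$ is symmetric; hence its strongly connected components coincide with its connected components, and the block upper triangular form supplied by Theorem~\ref{thm:StructureThm} is in fact block diagonal. So, after permuting some rows and the corresponding columns, $A=\diag(A_1,\ldots,A_s)$ with each $A_i$ irreducible—hence irreducible Hermitian—indexed on its rows and columns by $T_i\subseteq[n]$; set $f_i=\det(A_i+\diag\{x_k:k\in T_i\})$, which is irreducible by Lemma~\ref{lem:IrrMatrixIrrPoly}, so that $f=f_A=f_1\cdots f_s$. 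Because $f$ is multiaffine, each variable occurs in exactly one factor, so $\{T_1,\ldots,T_s\}$ partitions $[n]$, and by unique factorization in $\C[x_1,\ldots,x_n]$ the multiset $\{f_1,\ldots,f_s\}$ (with its variable partition) is intrinsic to $A$.

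Next I would use that the principal minors of a matrix are precisely the coefficients of its determinantal polynomial, so a matrix $B$ lies in the fiber of $A$ if and only if $f_B=f_A$; thus part (b) is essentially Corollary~\ref{cor:GeneralFiber} combined with Lemma~\ref{lem:FiberHerIrr}. In detail, given $B$ in the fiber, Theorem~\ref{thm:StructureThm} lets me write $B$, after permuting some rows and the corresponding columns, as a block upper triangular matrix with $s$ irreducible diagonal blocks $B_1,\ldots,B_s$ whose determinantal polynomials are the $f_i$ reindexed by some permutation $\sigma$, with $B_i$ indexed by $T_{\sigma(i)}$. Then $\det(B_i+\diag\{x_k:k\in T_{\sigma(i)}\})=f_{\sigma(i)}=\det(A_{\sigma(i)}+\diag\{x_k:k\in T_{\sigma(i)}\})$, so $B_i$ and $A_{\sigma(i)}$ have equal principal minors, i.e.\ $B_i$ lies in the fiber of the irreducible Hermitian matrix $A_{\sigma(i)}$; by Lemma~\ref{lem:FiberHerIrr} (for blocks of size less than four one invokes directly the size-free results of \cite{AV22} and \cite{BBL09} underlying that lemma), $B_i$ is diagonally equivalent to an irreducible Hermitian matrix. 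Conversely, if $B$ can be written, after permuting some rows and the corresponding columns, as a block upper triangular matrix with irreducible diagonal blocks that are diagonally equivalent to irreducible Hermitian matrices and whose determinantal polynomials are $f_1,\ldots,f_s$, then $f_B$ equals the product of the determinantal polynomials of its diagonal blocks, namely $f_1\cdots f_s=f_A$, so $B$ is in the fiber.

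I expect the only genuinely delicate points to be bookkeeping: verifying that Hermiticity forces complete reducibility (the symmetric-support observation above) and tracking the permutation $\sigma$ matching the blocks of $B$ to those of $A$, which is legitimate exactly because the irreducible factorization of $f_A$ and its variable partition are unique. All the analytic content—that a matrix sharing a determinantal polynomial with an irreducible Hermitian matrix is itself diagonally equivalent to an irreducible Hermitian matrix—is already packaged in Lemma~\ref{lem:FiberHerIrr}, so no new estimates are needed.
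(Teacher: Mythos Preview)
Your proposal is correct and follows essentially the same approach as the paper: the paper's proof is the single line ``Similar proof as that of Theorem~\ref{thm:FiberSym},'' and you have faithfully unpacked that template, replacing Lemma~\ref{lem:FiberSymIrr} by Lemma~\ref{lem:FiberHerIrr} and invoking Theorem~\ref{thm:StructureThm} for the block structure. Your extra care about complete reducibility of Hermitian matrices, the permutation matching blocks, the converse inclusion, and the small-block caveat are all welcome elaborations of details the paper leaves implicit.
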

\begin{proof}
	Similar proof as that of Theorem~\ref{thm:FiberSym}
\end{proof}
Notice that, unlike the symmetric case, the diagonal blocks of a matrix that lies in the fiber of a Hermitian matrix $A$ need not be diagonally equivalent to the diagonal blocks of $A$. See for instance \cite[Example 4.8]{AV22}. 

%\begin{cor}
%	Let $A$ be an $n\times n$ Hermitian matrix with $n\geq 4$. The fiber of a completely reducible Hermitian matrix $A$ with $s$ irreducible diagonal blocks in the space of Hermitian matrices under the principal minor map consists of all completely reducible Hermitian matrices with $s$ diagonal irreducible blocks that are diagonally equivalent to some irreducible Hermitian matrices with determinantal polynomials equal to the factors of $f_A = \det(\diag\{x_1,\hdots,x_n\}+A)$.
%\end{cor}
Now we are ready to answer Question~\ref{ques:StableMatrices} by Borcea, Br\"{a}nd\'{e}n, and Liggett \cite[Question~3.4]{BBL09} about real stable matrices. A real polynomial $f\in \R[x_1,\hdots,x_n]$ is \emph{stable} if it has no zeros with strictly positive imaginary parts. A real stable matrix $A$ is an $n\times n$ complex matrix with real stable determinantal polynomial i.e $f_A = \det(\diag\{x_1,\hdots,x_n\}+A)$ is real stable. 
\begin{cor}\label{cor:StableMatrix}
	Let $A$ be an $n\times n$ complex matrix such that $f = \det(\{x_1,\hdots,x_n\}+A)$ is real stable. Then 
	\begin{enumerate}
		\item[(a)] If $f$ is irreducible, then $A$ is diagonally equivalent to an irreducible Hermitian matrix with equal principal minors.
		\item[(b)] If $f$ factors as $ f_1\cdots f_s$ where each factor $f_k$ is irreducible, then $A$ can be written, after permuting some rows and the corresponding columns, as a block upper triangular matrix with $s$ diagonal blocks that are irreducible and diagonally equivalent to Hermitian irreducible matrices that have determinantal polynomials equal to the irreducible factors of $f$.
	\end{enumerate} 
\end{cor}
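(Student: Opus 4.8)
The plan is to show that this corollary is an essentially immediate consequence of the already-established structure theorem (Theorem~\ref{thm:StructureThm}), the characterization of fibers of Hermitian matrices (Theorem~\ref{thm:FiberHer}), and the input result of the author and Vinzant \cite[Theorem~6.4]{AV22}, which asserts that a matrix whose determinantal polynomial is real stable and irreducible is diagonally equivalent to an irreducible Hermitian matrix. First I would treat part (a): assume $f = f_A = \det(\diag\{x_1,\dots,x_n\}+A)$ is real stable and irreducible. By \cite[Theorem~6.4]{AV22} there is an irreducible Hermitian matrix $H$ with $\det(\diag\{x_1,\dots,x_n\}+H) = f$, and diagonal equivalence can be arranged so that $A$ itself is diagonally equivalent to $H$; since diagonally equivalent matrices have the same principal minors, this is exactly the assertion of (a).

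For part (b), suppose $f$ factors as $f_1\cdots f_s$ with each $f_k$ irreducible; note that each $f_k$, being a factor of a real stable multiaffine polynomial with the appropriate leading coefficient, is itself real stable (real stable polynomials are closed under taking factors — each $f_k$ has no zeros with strictly positive imaginary part because $f$ has none, and one normalizes so the coefficient of $\prod_{k\in T_i}x_k$ is one). Applying Theorem~\ref{thm:StructureThm} to $A$, after permuting some rows and the corresponding columns $A$ becomes block upper triangular with $s$ irreducible diagonal blocks $A_1,\dots,A_s$, where $A_i$ is indexed by $T_i$ and $\det(A_i + \diag\{x_k : k\in T_i\}) = f_i$. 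Each $f_i$ is real stable and irreducible, so part (a) applied to the block $A_i$ shows $A_i$ is diagonally equivalent to an irreducible Hermitian matrix with determinantal polynomial $f_i$. This is precisely the description claimed in (b).

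The only genuine point requiring care — and the step I expect to be the main obstacle — is the claim that the irreducible factors of a real stable multiaffine polynomial are again real stable, and more subtly, the bookkeeping of which variables appear in which factor: one must invoke that the factorization $f = f_1\cdots f_s$ produced by Theorem~\ref{thm:StructureThm} has $f_i \in \F[x_k : k\in T_i]$ for a partition $T_1,\dots,T_s$ of $[n]$, so that each $f_i$ is genuinely a determinantal polynomial in its own variable set and \cite[Theorem~6.4]{AV22} applies verbatim to the block $A_i$. The closure of real stability under factorization in the multiaffine setting is standard (a nonconstant factor of a polynomial with no zeros in a given open region again has no zeros there, after clearing the complementary factor), so this is a short remark rather than a new argument; everything else is a direct citation of the preceding results. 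One should also remark, as the paper does in the surrounding text, that the converse direction (such block upper triangular matrices are indeed real stable) follows from \cite[Theorem~3.2]{BBL09} together with the fact that a product of real stable polynomials in disjoint variable sets is real stable.
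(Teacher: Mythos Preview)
Your proposal is correct and follows essentially the same route as the paper: part (a) is obtained from the cited result of \cite{AV22} together with Lemma~\ref{lem:IrrMatrixIrrPoly}, and part (b) is obtained by applying Theorem~\ref{thm:StructureThm} to get the block upper triangular form and then invoking part (a) on each irreducible diagonal block (the paper cites \cite[Theorem~6.6]{AV22} rather than Theorem~6.4, and leaves the closure of real stability under factorization implicit, but these are cosmetic differences).
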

\begin{proof}
	Part $(a)$ of the corollary follows from \cite[Theorem $6.6$]{AV22} and Lemma~\ref{lem:IrrMatrixIrrPoly}. For part $(b)$, Theorem~\ref{thm:StructureThm} implies that $A$ can be written as a block upper triangular matrix with $s$ diagonal irreducible blocks that have determinantal polynomials equal to the factors of $f$. Since each $f_i$ is stable and irreducible, then by \cite[Theorem~6.6]{AV22} and Lemma~\ref{lem:IrrMatrixIrrPoly} each diagonal block is diagonally equivalent to an irreducible Hermitian matrix as desired.
\end{proof}

\bibliographystyle{plain}

\end{document}